\newtheorem{thm}{Theorem}[section]
\newtheorem{prop}[thm]{Proposition}
\newtheorem{lem}[thm]{Lemma}
\newtheorem{cor}[thm]{Corollary}
\theoremstyle{definition}
\newtheorem{defn}[thm]{Definition}
\newcommand{\mbb}{\mathbb}
\newcommand{\de}{\delta}
\newcommand{\ov}{\overline}
\newcommand{\pa}{\partial}
\newcommand{\mf}{\mathbb}
\newcommand{\Om}{\Omega}
\newcommand{\al}{\alpha}
\newcommand{\be}{\beta}
\newcommand{\z}{\zeta}
\newcommand{\ti}{\tilde}
\newcommand{\De}{\Delta}
\newcommand{\Ga}{\Gamma}
\renewcommand{\Re}{\operatorname{Re}}
\renewcommand{\Im}{\operatorname{Im}}
\newcommand{\Ric}{\operatorname{Ric}}
\newcommand{\diag}{\operatorname{diag}}
\newcommand{\ad}{\operatorname{ad}}
\numberwithin{equation}{section}
\title{Weighted boundary limits of the Kobayashi--Fuks metric on h-extendible domains}
\keywords{Bergman kernel, Kobayashi--Fuks metric, h-extendible domains}
\subjclass{32F45, 32A25, 32A36}
\author{Debaprasanna Kar}
\address{Department of Mathematics, Indian Institute of Science, Bangalore-560012, India}
\email{debaprasanna@iisc.ac.in}
\begin{document}
\maketitle

\begin{abstract}
We study the boundary behavior of the Kobayashi--Fuks metric on the class of h-extendible domains. Here, we derive the non-tangential boundary asymptotics of the Kobayashi--Fuks metric and its Riemannian volume element by the help of some maximal domain functions and then using their stability results on h-extendible local models.
\end{abstract}

\section{Introduction}

The estimates and the study of asymptotic boundary behavior of invariant metrics play an important role in a wide variety of problems in complex analysis like obtaining regularity results of proper holomorphic mappings, determining biholomorphic equivalence or non-equivalence of domains, describing domains with their non-compact groups of automorphisms, acquiring asymptotic estimates of various classes of holomorphic functions, etc. The limiting behavior of all the classical pseudometrics, e.g. the Carath\'eodory metric, the Bergman metric, the Kobayashi metric, have been studied extensively by many authors both on strongly pseudoconvex domains (see e.g. \cite{Gr, Jarn-Nik-2002, Diederich-1970, Fefferman-1974, Klembeck-1978, Died-Forn, Kim-Park}), as well as on various classes of weakly pseudoconvex domains in $\mbb C^n$ (see e.g. \cite{Nikolov-2000, Lee-2001, Warsz, Yu-1995, BSY-1995, Catlin-1989, Herbort-German-1983, McNeal-1992}). The class of \textit{h-extendible domains} (which will be defined in details later) contains a large collection of weakly pseudoconvex finite type domains. In this note we study the boundary asymptotics of the Kobayashi--Fuks metric, a K\"ahler metric closely related to the Bergman metric, on h-extendible domains. In particular, we compute the non-tangential boundary limits of the Kobayashi--Fuks metric in terms of the corresponding Kobayashi--Fuks invariants of an unbounded local model of the h-extendible domain.

Recall that a $C^{\infty}$-smooth, finite type boundary point $p$ of a domain $\Om$ in $\mbb C^n$ is said to be an h-extendible (or, semiregular) point if $\Om$ is pseudoconvex near $p$ and Catlin and D'Angelo's multitypes of $p$ coincide. If all  boundary points of $\Om$ are h-extendible points, $\Om$ is called an h-extendible domain (or, semiregular domain). As a matter of fact, h-extendible domains are precisely those domains enjoying certain ``bumping property" (which we will describe in the next section). It is known that if $\Om$ is linearly convexifiable near $p$, then $p$ is h-extendible \cite{Conrad}. In particular, convex finite type domains are h-extendible domains. Similarly, $p$ is h-extendible if the Levi form of $p$ has corank at most one \cite{Yu-1995}. Therefore, h-extendibility occurs in the case of strongly pseudoconvex domains in $\mbb C^n$, and on pseudoconvex finite type domains in $\mbb C^2$ as well. In this note all our results are concerned with h-extendible domains which are bounded.

\section{Notations, Definitions and Main Results}

Let us first recall the definition of the Kobayashi--Fuks metric. For a domain $ \Om\subset\mf{C}^n$, the space
\[
A^2( \Om)=\left\{\text{$f:  \Om \to \mf{C}$ holomorphic and $\|f\|^2_ \Om:=\int_{ \Om} \vert f\vert^2 \, dV< \infty$} \right\},
\]
where $dV$ is the Lebesgue measure on $\mf{C}^n$, is a closed subspace of $L^2( \Om)$, and hence is a Hilbert space. It is called the \textit{Bergman space} of $ \Om$. $A^2( \Om)$ carries a reproducing kernel $K_ \Om(z,w)$ called the \textit{Bergman kernel} for $ \Om$. Let $K_ \Om(z):=K_ \Om(z,z)$ be its restriction to the diagonal of $ \Om$. It is well-known (see \cite{Jarn-Pflug-2013}) that 
\begin{align*}
K_ \Om(z)=\sup\big\{|f(z)|^2: f\in A^2( \Om), \|f\|_ \Om= 1\big\}.
\end{align*}
If $ \Om$ is bounded, one easily sees that $K_ \Om>0$. It is also known that $\log K_ \Om$ is a strictly plurisubharmonic function
and thus is a potential for a K\"ahler metric which is called the \textit{Bergman metric} for $ \Om$ and is given by
\[
ds^2_{ \Om}(z)=\sum_{\al,\be=1}^n g^{ \Om}_{\al\ov \be} (z) \, dz_{\al}d\ov z_{\be},
\]
where
\[
g^{ \Om}_{\al \ov \be}(z)=\frac{\pa^2 \log K_{ \Om}}{\pa z_{\al} \pa \ov z_{\be}}(z).
\]
For $z\in  \Om$ and $u\in \mbb C^n$, we denote
\[
G_{ \Om}(z):=\begin{pmatrix}g^{ \Om}_{\al \ov \be}(z)\end{pmatrix}_{n \times n} \quad \text{and} \quad B_ \Om(z,u):=\bigg(\sum_{\al,\be=1}^n g^{ \Om}_{\al\ov \be} (z) \, u_{\al}\ov u_{\be}\bigg)^{1/2},
\]
where $B_ \Om(z,u)$ is the Bergman length of the vector $u$ at $z\in  \Om$. We will denote the components of the Ricci tensor of the Bergman metric on $ \Om$ by $\Ric_{\al \ov \be}^{ \Om}$, and its Ricci curvature by $\text{Ric}_{ \Om}$. Recall that 
\begin{align*}
\Ric_{\al \ov \be}^{ \Om}(z)=  - \frac{\pa^2 \log \det G_ \Om}{\pa z_{\al} \pa \ov z_{\be}}(z)\quad \text{and} \quad \text{Ric}_{ \Om}(z,u)=\frac{\sum_{\al, \be=1}^n \Ric_{\al \ov \be}^{ \Om}(z) u_{\al} \ov u_{\be}}{\sum_{\al,\be=1}^n g^{ \Om}_{\al\ov \be}(z)u_\al \ov u_\be}.
\end{align*}
Kobayashi \cite{Kob59} showed that the Ricci curvature of the Bergman metric on a bounded domain $\Om$ in $\mbb C^n$ is strictly bounded above by $n+1$ and hence the matrix
\[
\ti G_{ \Om}(z):=\begin{pmatrix} \ti g^{ \Om}_{\al \ov \be}(z)\end{pmatrix}_{n \times n} \quad \text{where} \quad \ti g^{ \Om}_{\al \ov \be}(z)=(n+1)g^{ \Om}_{\al\ov \be}(z)-\Ric^{ \Om}_{\al\ov \be}(z),
\]
is positive definite (see also Fuks \cite{Fuks66}). Therefore,
\[
d\ti s^2_{ \Om}=\sum_{\al,\be=1}^n \ti g^{ \Om}_{\al \ov \be}(z)\,dz_{\al}d\ov z_{\be}
\]
is a K\"ahler metric with K\"ahler potential $\log (K_ \Om^{n+1} \det G_{ \Om})$, which we call the \textit{Kobayashi--Fuks metric}. For a domain $ \Om\subset \mbb C^n$, bounded or not, with positive Bergman kernel and positive definite Bergman metric, if the Ricci curvature of the Bergman metric on $ \Om$ is known to be strictly bounded above by $n+1$, then one can define the Kobayashi--Fuks metric $d\ti s^2_{ \Om}$ as a positive definite K\"ahler metric on $ \Om$. Moreover, if $F:  \Om_1 \to  \Om_2$ is a biholomorphism, then
\begin{equation*}\label{tr-kf}
\ti G_{ \Om_1}(z)= F'(z)^t\,\ti G_{ \Om_2}\big(F(z)\big) \ov F'(z),
\end{equation*}
where $F'(z)$ is the complex Jacobian matrix of $F$ at $z$. This implies that $d\ti s^2_{ \Om}$ is an invariant metric on $ \Om$.

The treatment and the study of the Kobayashi--Fuks metric has been fairly new. This metric plays an important role in the study of the Bergman representative coordinates, a tool introduced by Bergman in his program of generalizing the Riemann mapping theorem to $\mbb C^n$ for $n>1$. This fact was first observed by \.{Z}ywomir Dinew \cite{Dinew11}. Dinew, in \cite{Dinew13}, subsequently studied the completeness of this metric on a general class of pseudoconvex domains. The boundary behavior of the Kobayashi--Fuks metric and some of its associated invariants have been obtained in \cite{Borah-Kar-2} on strongly pseudoconvex domains by establishing localizations of this metric near holomorphic peak points. Recently, in \cite{Kar}, by deriving certain boundary estimates for the Kobayashi--Fuks metric on a smoothly bounded planar domain $D$, the existence of geodesic spirals for $d\ti s^2_{D}$ is shown whenever $D$ is non-simply connected.

Now let us describe the class of h-extendible domains, on which we will compute the boundary asymptotics and state the main results. Associated to each smooth boundary point $p$ of a pseudoconvex domain $\Om\subset \mbb C^n$, we have a well-known (invariant) type: the \textit{D'Angelo type} $\De(p)=\left(\De_n(p),\ldots, \De_1(p)\right)$, where the $q$-type $\De_q$, roughly speaking, measures the maximal order of contact of $q$-dimensional varieties with the boundary of $\Om$ at $p$ (see \cite{D'Angelo-1982}). Corresponding to each point $p$ on the smooth boundary $\pa\Om$, D. Catlin \cite{Catlin2} introduced another invariant $\mathcal{M}(p)=(m_1,\ldots,m_n)$, generally called as the \textit{Catlin multitype} of $p$. This invariant arises from the study of the boundary regularity properties of solutions of the $\ov{\pa}$-Neumann problem on finite type domains (in the sense of D'Angelo). Here, the entries $m_i$ can be thought of as the optimal weight assigned to the coordinate direction $z_i$. In \cite{Catlin2}, Catlin also showed that the general relation between these two invariants is $\mathcal{M}(p)\leq \De(p)$ in the sense that $m_{n+1-q}\leq \De_q$ for $1\leq q \leq n$. If $\De_1(p)<\infty$, or in other words if $p$ is a point of finite type (in the sense of D'Angelo), then there exist local coordinates $(z_1,z')=(z_1,\ldots,z_n)$ around $p$ in which $p$ is translated to the origin, and a real-valued, plurisubharmonic, weighted homogeneous polynomial $P$ of degree $\mathcal{M}(p)$ containing no pure terms such that $\Om$ can be defined near $p=0$ locally by (see \cite{BSY-1995})
\begin{align*}
\Re z_1+P(z')+o\bigg(\sum\limits_{j=1}^n |z_j|^{m_j}\bigg)<0.
\end{align*}
Here, the polynomial being \textit{weighted homogeneous} of degree $\mathcal{M}(p)$ means 
\begin{align*}
P\big(\pi_t(z')\big)=tP(z'),\quad\text{where}\quad \pi_t(z)=\big(t^{1/m_1}z_1, \ldots, t^{1/m_n}z_n\big).
\end{align*}
In the above expression, $\pi_t$ is an anisotropic dilation acting on $\mbb C^n$ and $\pi_t(z')$ is its restriction to the last $n-1$ coordinates. The unbounded domain
\begin{align*}
D_0=\{(z_1, z'):\Re z_1+P(z')<0\}
\end{align*}
is called the \textit{local model} for $\Om$ at $p$ in the coordinates $(z_1,z')$. Note that the polynomial $P$, and hence the local model $D_0$, is not unique in general and depends on the boundary point $p$.

\begin{defn}
A boundary point $p$ of $\Om$ is called \textit{h-extendible} if $\mathcal{M}(p)=\De(p)<\infty$. A pseudoconvex domain $\Om$ is called an \textit{h-extendible domain} if all its boundary points are h-extendible.
\end{defn}

\noindent It is shown in \cite{Died-Herbort-1994} (see also \cite{Yu-1994}) that $\Om$ is h-extendible at $p$ if and only if the local model $D_0$ admits a \textit{bumping function} $a(z')$ on $\mbb C^{n-1}$ which satisfies the following conditions:
\begin{itemize}
\item[(i)] $a(z')$ is $C^{\infty}$-smooth and positive whenever $z'\neq 0$,
\item[(ii)] $a(z')$ is weighted homogeneous of degree $\mathcal{M}(p)$ (the same weight as for $P$), and
\item[(iii)] $P(z')-\de a(z')$ is strictly plurisubharmonic on $\mbb C^{n-1}\setminus \{0\}$ for all $0<\de\leq 1$.
\end{itemize}
In such a case, the local model $D_0$ is called an \textit{h-extendible model}. The above three conditions basically state that $D_0$ can be approximated from outside by the family of pseudoconvex domains $D_{\de}:=\{(z_1,z'): \Re z_1+P(z')-\de a(z')<0\}$ having the same homogeneity as $D_0$. Therefore one can say that the local model is ``homogeneously extendible" to a larger pseudoconvex domain, which inspired the terminology ``h-extendible". We will call each $D_{\de}$ as a \textit{bumped model} for $\Om$ at $p$. From the bumping property above one easily sees that any smoothly bounded strongly pseudoconvex domain is an h-extendible domain, as $P$ is strictly plurisubharmonic and serves as its own bumping function in this case.

Before stating our results, let us fix some notations. We denote by $\ti g_{\Om}(z)$ the determinant of $\ti G_{\Om}(z)$. The length of a vector $u$ at a point $z\in \Om$ in $d\ti s^2_{\Om}$ will be denoted by $\ti B_{\Om}(z,u)$, i.e.,
\[
\ti B_{\Om}(z,u)=\bigg(\sum_{\al,\be=1}^n \ti g^{\Om}_{\al\ov \be} (z) \, u_{\al}\ov u_{\be}\bigg)^{1/2}.
\]
The distance of $z \in \Om$ to the boundary $\pa \Om$ will be denoted by $d_{\Om}(z)$. We will drop the subscript $\Om$ and simply write as $d(z)$ as the domain under consideration is clear.

\begin{thm}\label{kf metric}
Let $\Om$ be a bounded pseudoconvex domain in $\mbb C^n$ and $p\in \pa \Om$ an h-extendible boundary point with Catlin's multitype $(m_1,\ldots,m_n)$. Then there are local coordinates $z$, and a local model $D_0$ at $p$ such that
\begin{align*}
\lim_{\substack{z\to p \\ z\in \Ga}}\big|\pi_{1/d(z)}(u)\big|^{-1}\ti B_{\Om}(z,u)=\ti B_{D_0}(b^*,u^*).
\end{align*}
Here $\Ga$ is a non-tangential cone in $\Om$ with vertex at $p$, $\pi_{1/d(z)}(u)=\big(d(z)^{-1/m_1}u_1,\ldots,d(z)^{-1/m_n}u_n\big) $, the unit vector $u^*=\lim_{z\to p} \pi_{1/d(z)}(u)/|\pi_{1/d(z)}(u)|$ and $b^*=(-1,0')$.
\end{thm}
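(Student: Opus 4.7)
The plan is to implement the anisotropic scaling method tailored to the Kobayashi--Fuks tensor, reducing the theorem to a stability statement along the convergence $(\Om_z,b_z)\to(D_0,b^*)$. To set up, by h-extendibility at $p$ one fixes local coordinates $(z_1,\ldots,z_n)$ centred at $p$, a weighted homogeneous plurisubharmonic polynomial $P$ of multitype $(m_1,\ldots,m_n)$ with $m_1=1$, and a bumping function $a(z')$, so that near $p=0$ the domain $\Om$ is defined by $r(z)=\Re z_1+P(z')+o\big(\sum_j|z_j|^{m_j}\big)<0$, the local model is $D_0=\{\Re z_1+P(z')<0\}$, and the outer pseudoconvex bumped family $D_\de=\{\Re z_1+P(z')-\de a(z')<0\}$ approximates $D_0$ from outside. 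The $D_\de$ will serve as scaffolding for the stability estimates below.

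For $z\in\Ga$ approaching $p$, one applies the anisotropic dilation $\pi_{1/d(z)}$, a global biholomorphism of $\mbb C^n$ whose complex Jacobian is the constant diagonal matrix $\pi_{1/d(z)}$ itself. Writing $\Om_z=\pi_{1/d(z)}(\Om)$, $b_z=\pi_{1/d(z)}(z)$, and $\ti u_z=\pi_{1/d(z)}(u)/|\pi_{1/d(z)}(u)|$, the transformation rule \eqref{tr-kf} together with the $\mbb C$-linearity of $\ti B_\Om$ in $u$ yields
\begin{equation*}
\big|\pi_{1/d(z)}(u)\big|^{-1}\,\ti B_\Om(z,u)=\ti B_{\Om_z}(b_z,\ti u_z),
\end{equation*}
reducing the theorem to the limit $\ti B_{\Om_z}(b_z,\ti u_z)\to \ti B_{D_0}(b^*,u^*)$. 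Convergence of the data is then straightforward: $\ti u_z\to u^*$ by hypothesis; non-tangentiality gives $|z_j|\lesssim d(z)$, so $(b_z)_j=z_j/d(z)^{1/m_j}\to 0$ for $j\ge 2$ (since $m_j\ge 2$) and $\Re(b_z)_1\to -1$ because $P(z')/d(z)=O\big(\sum_{j\ge 2}d(z)^{m_j-1}\big)\to 0$, while the translation invariance of $D_0$ in $\Im z_1$ allows one to normalise $b_z\to b^*=(-1,0')$. Finally the rescaled local defining function $r\big(\pi_{d(z)}(w)\big)/d(z)=\Re w_1+P(w')+o(1)$ forces $\Om_z\to D_0$ uniformly on compacta, with the bumping family furnishing uniform pseudoconvexity along the sequence.

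The decisive and hardest step is the stability
\begin{equation*}
\ti B_{\Om_z}(b_z,\ti u_z)\longrightarrow \ti B_{D_0}(b^*,u^*).
\end{equation*}
Since $\ti g^\Om_{\al\ov\be}=(n+1)\,\pa^2(\log K_\Om)/\pa z_\al\pa\ov z_\be+\pa^2(\log\det G_\Om)/\pa z_\al\pa\ov z_\be$, this requires joint pointwise convergence at $b_z$ of $K_{\Om_z}$, of the Bergman matrix $G_{\Om_z}$ and its determinant, and of the second-order mixed derivatives of $\log K_{\Om_z}$ and $\log\det G_{\Om_z}$. The scheme is: use the sup-characterisation $K_\Om(z)=\sup\{|f(z)|^2:\|f\|_\Om\le 1\}$ together with the analogous Azukawa--Forelli--Rudin type extremal quantities for derivatives, all monotone decreasing in $\Om$; obtain one-sided bounds from the outer approximants $D_\de$ and matching reverse bounds via H\"ormander/Ohsawa $L^2$-estimates on polydiscs at $b_z$ inside $\Om_z$ whose inradius is bounded below thanks to the bumping; then upgrade kernel convergence to $C^2$-convergence of $\log K_{\Om_z}$ and $\log\det G_{\Om_z}$ by Cauchy integration on polydiscs of fixed radius around $b_z$.

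Putting these together gives entry-wise convergence $\ti g^{\Om_z}_{\al\ov\be}(b_z)\to \ti g^{D_0}_{\al\ov\be}(b^*)$, which is exactly the desired length identity. The main obstacle is the derivative-level stability of $\log\det G$ on h-extendible models, which is precisely the point at which the bumping function becomes indispensable: without the outer pseudoconvex approximants $D_\de$ and the uniform interior-polydisc estimates they underwrite, the Ricci term in $\ti g^\Om_{\al\ov\be}$ cannot be controlled under the scaling.
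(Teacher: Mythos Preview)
Your scaling framework is the right one, and the paper follows the same broad outline: anisotropic dilation, sandwich via the bumped models $D_\de$, then stability as $\de\to 0^+$. However, two concrete steps in your proposal do not go through as written.

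First, you dilate the \emph{entire} domain, setting $\Om_z=\pi_{1/d(z)}(\Om)$, and then want to bound it from outside by $D_\de$. But $\Om\not\subset D_\de$ globally, only $\Om\cap U_\de\subset D_\de$ for a suitably small neighbourhood $U_\de$ of $p$; after dilation the far-away part of $\Om$ is still present in $\Om_z$ and need not sit inside $D_\de$, so your ``one-sided bounds from the outer approximants $D_\de$'' are unavailable. The paper fixes this by first localising: since $p$ is a holomorphic peak point for an h-extendible boundary, one has $M_{\Om}/M_{\Om\cap U_\de}\to 1$ (and similarly for $K$, $J$), which lets one replace $\Om$ by $\Om\cap U_\de$ before dilating and only then invoke $\pi_{1/|r(z)|}(\Om\cap U_\de)\subset D_\de$.

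Second, your stability step attempts to control $\ti B_{\Om_z}$ directly through monotone extremal characterisations of the constituent pieces, but the quantity that actually feeds $\ti B^2_\Om$, namely $I_\Om(z,u)$ (so that $\ti B^2_\Om=I_\Om/K_\Om$), is \emph{not} monotone in $\Om$; nor is there an obvious monotone extremal formula for the Ricci components $\pa^2(\log\det G_\Om)/\pa z_\al\pa\ov z_\be$. The paper's device is to pass to $M_\Om=K_\Om^n J_\Om I_\Om$, which \emph{is} monotone decreasing in the domain; one squeezes $M_{\Om_z^\de}$ between $M_{\Om_z^\de\cap D_0}$ (Ramadanov from inside) and $M_{D_\de}$ (outer bumped model, then $\de\to 0^+$), obtains the limit for $M_\Om$, and finally divides by the already-known limits of $K_\Om^{n+1}$ and $J_\Om$ from Krantz--Yu to recover $\ti B_\Om^2=M_\Om/(K_\Om^{n+1}J_\Om)$. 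Your suggested route via Cauchy integration is also not quite right, since $\log K_{\Om_z}$ and $\log\det G_{\Om_z}$ are not holomorphic; the correct upgrade to derivatives goes through normal-family arguments on the off-diagonal kernel $K_{\Om_z}(\cdot,\cdot)$, but even granting that, you still need the monotone sandwich at the level of $M$ (or some equally monotone surrogate) to pin down the limit.
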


\begin{thm}\label{g tilde}
Under the hypotheses and notations of Theorem~\ref{kf metric},
\begin{align*}
\lim_{\substack{z\to p \\ z\in \Ga}} \big(d(z)\big)^{\sum_{j=1}^n 2/m_j} \ti g_{\Om}(z)=\ti g_{D_0}(b^*).
\end{align*}
\end{thm}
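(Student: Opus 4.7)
The strategy is to run the same anisotropic rescaling as in Theorem~\ref{kf metric}, now tracking the determinant $\ti g_\Om$ instead of the Hermitian lengths $\ti B_\Om(z,u)$. Set $\Om_z := \pi_{1/d(z)}(\Om)$ and $p_z := \pi_{1/d(z)}(z)$. The complex Jacobian of $\pi_{1/d(z)}$ is the diagonal matrix $\diag\bigl(d(z)^{-1/m_1},\ldots,d(z)^{-1/m_n}\bigr)$, so taking determinants in the biholomorphic invariance identity~\eqref{tr-kf} applied to $\pi_{1/d(z)}:\Om\to\Om_z$ gives
\[
\ti g_\Om(z) = \bigl|\det \pi_{1/d(z)}'\bigr|^{2}\,\ti g_{\Om_z}(p_z) = d(z)^{-2\sum_{j=1}^n 1/m_j}\,\ti g_{\Om_z}(p_z).
\]
Hence the assertion is equivalent to
\[
\ti g_{\Om_z}(p_z) \longrightarrow \ti g_{D_0}(b^*) \quad \text{as } z\to p \text{ along } \Ga.
\]

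The convergence $\Om_z \to D_0$ (in the local Hausdorff sense, via smooth convergence on compacts of the anisotropically-scaled defining functions to $\Re w_1 + P(w')$) and $p_z \to b^* = (-1,0')$ are precisely the geometric facts already arranged in the proof of Theorem~\ref{kf metric}, so I would not redo them. What is left is to establish the continuity of $\ti g$ under this scaled convergence at $b^*$. Since
\[
\ti g_\Om = \det\bigl((n+1)G_\Om - \Ric^{\Om}\bigr),
\]
and the Ricci matrix involves $\partial\bar\partial \log\det G_\Om$, the quantity $\ti g_\Om$ is a polynomial in the derivatives of $\log K_\Om$ of order up to four. The plan is therefore to invoke the Bergman kernel stability on h-extendible local models (the same engine driving Theorem~\ref{kf metric}) and upgrade it to locally uniform $C^4$ convergence $\log K_{\Om_z} \to \log K_{D_0}$ on a fixed neighborhood of $b^*$. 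This locally uniform convergence transfers to entrywise convergence of $G_{\Om_z}$ and $\Ric^{\Om_z}$ at the base point, and continuity of the determinant then delivers the limit.

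The main obstacle is precisely this $C^4$ upgrade. The standard route is: (i) $L^2$ extremal problems using the bumping function $a(z')$ to produce uniform lower bounds for $K_{\Om_z}$ near $b^*$; (ii) Catlin-type localization on the bumped models $D_\de$, exploiting strict plurisubharmonicity of $P - \de a$, to produce matching off-diagonal upper bounds; (iii) a normal families argument on suitable orthonormal bases to extract $K_{D_0}$ as a locally uniform limit; and (iv) Cauchy estimates together with strict plurisubharmonicity of the limit potential, ensured by the bumping property, to pass from kernel convergence to locally uniform convergence of all derivatives of $\log K_{\Om_z}$. This is exactly the stability machinery already in place for Theorem~\ref{kf metric}, used now one differential order higher; with it in hand, Theorem~\ref{g tilde} follows at once from the displayed identity above.
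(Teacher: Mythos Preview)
Your route is genuinely different from the paper's, and it has a real gap.

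The paper does \emph{not} prove Theorem~\ref{g tilde} by direct stability of $\log K_{\Om_z}$ in $C^4$. Instead it introduces the auxiliary quantity $T_D(z)=K_D(z)^{2n}J_D(z)\,\ti g_D(z)$, proves in a separate lemma that $T$ is monotone under domain inclusion (via simultaneous diagonalization of $\ti G_{\Om_1}$ and $\ti G_{\Om_2}$ and the known monotonicity of $M_\Om$), and then reruns the sandwich argument of Theorem~\ref{kf metric} with $T$ in place of $M$: an upper bound from $\Om_z^\de\cap D_0\nearrow D_0$, a lower bound from $\Om_z^\de\subset D_\de$ followed by $\de\to 0$. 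Finally the already-known limits for $K_\Om$ and $J_\Om$ are divided out to isolate $\ti g_\Om$.

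The gap in your proposal is the assertion that ``the stability machinery already in place for Theorem~\ref{kf metric}'' yields locally uniform $C^4$ convergence $\log K_{\Om_z}\to\log K_{D_0}$. That proof establishes no such thing: it never shows $K_{\Om_z^\de}\to K_{D_0}$ at all. The scaled domains $\Om_z^\de$ neither increase nor decrease to $D_0$, so Ramadanov does not apply, and local Hausdorff convergence of defining functions is by itself insufficient for Bergman kernel convergence without further uniform $L^2$ estimates. The monotonicity of $M$ (and, in the present theorem, of $T$) is precisely the device that circumvents this; your outline (i)--(iv) is a plausible program for a direct proof, but it is substantial additional work rather than a corollary of what is already on the page.

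Two smaller points. First, you should dilate a localized piece $\Om\cap U_\de$ rather than all of $\Om$, and invoke Lemma~\ref{localization} to pass between $\ti g_\Om$ and $\ti g_{\Om\cap U_\de}$; otherwise the bumping inclusion $\Om_z\subset D_\de$ need not hold. Second, $p_z$ does not in general converge to $b^*$: along a non-tangential cone it only stays in a compact portion of $\{\Re z_1=-1,\ z'=0\}$, and one must use that $D_0$ (and hence $\ti g_{D_0}$) is independent of $\Im z_1$.
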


Note that in the theorems above, although $\Om$ is required to be pseudoconvex globally, we only need the boundary $\pa \Om$ to be smooth and finite type in a neighborhood of $p$. Again, observe that earlier we defined the Kobayashi--Fuks metric as a positive definite K\"ahler metric on bounded domains, hence a priori it is not obvious why $\ti B_{D_0}$ and $\ti g_{D_0}$ are non-zero. In the next section, by using certain maximal domain function and a result of Boas, Straube and Yu, we see that whenever $z\in D_0$ and $u\neq 0$, $\ti B(z,u)$ and $\ti g(z)$ are positive not only on $D_0$, but also on the bumped model $D_{\de}$ for all sufficiently small $\de>0$.

In the above theorems, we evaluate the boundary limits in terms of their values at a fixed interior point of the local model. Explicit computation of the Kobayashi--Fuks metric, in general, is quite arduous because of the complexity of the computation involved. Even for domains with known Bergman kernel and metric (e.g. the annulus, the symmetrized bidisc), it is very hard to compute the Kobayashi--Fuks metric. However, we can compute the boundary limits in these theorems explicitly if the local model is simple. One such case is when $\Om$ is a smoothly bounded strongly pseudoconvex domain:

\begin{cor}\label{str-pscvx}
Let $\Om\subset \mbb C^n$ be a smoothly bounded strongly pseudoconvex domain and $p\in \pa \Om$. Then there are holomorphic coordinates $z$ near $p$ in which
\begin{align*}
\lim\limits_{z\to p} \big(d(z)\big)^{n+1}\ti g_{\Om}(z) &= \dfrac{(n+1)^n (n+2)^n}{2^{n+1}},\\
\lim\limits_{z\to p} \dfrac{d(z)}{\sqrt{|u_1|^2+d(z)|u'|^2}} \ti B_{\Om}(z,u) &=\dfrac{1}{2}\sqrt{(n+1)(n+2)}\sqrt{|u^*_1|^2+2|(u^*)'|^2}
\end{align*}
for all $u\in \mbb C^n\setminus\{0\}$. Here $u^*=\lim_{z\to p}\ti u(z)/|\ti u(z)|$ with $\ti u(z)=\big(u_1/d(z),u'/\sqrt{d(z)}\big)$.
\end{cor}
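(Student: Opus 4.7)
The plan is to specialize Theorems \ref{kf metric} and \ref{g tilde} to the strongly pseudoconvex case and then evaluate the resulting local-model invariants by transferring the computation to the unit ball via a Cayley biholomorphism. At a strongly pseudoconvex boundary point $p$ the Levi form is positive definite, so the Catlin multitype is $(1, 2, \ldots, 2)$; thus $m_1 = 1$, $m_j = 2$ for $j \geq 2$, $\sum_j 2/m_j = n + 1$, and $\pi_{1/d(z)}(u) = (u_1/d(z),\, u'/\sqrt{d(z)}) = \tilde u(z)$. Since $|\pi_{1/d(z)}(u)| = \sqrt{|u_1|^2 + d(z)|u'|^2}/d(z)$, the exponent and prefactor appearing in the corollary match exactly those predicted by the two theorems. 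I would next choose local holomorphic coordinates at $p$ so that a smooth defining function of $\Om$ takes the form $\rho(z) = \Re z_1 + \tfrac12|z'|^2 + O(|z|^3)$ (translate $p$ to the origin, rotate so $\{\Re z_1 = 0\}$ is tangent, normalize so $|\nabla \rho(p)| = 1$, and linearly change the $z'$-coordinates to diagonalize the Levi form to $\tfrac12 I_{n-1}$). The corresponding local model is then the Siegel half-space $D_0 = \{\Re z_1 + \tfrac12|z'|^2 < 0\}$, and the corollary reduces to evaluating $\tilde g_{D_0}(b^*)$ and $\tilde B_{D_0}(b^*, u^*)$ at $b^* = (-1, 0')$.

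To evaluate these, I would use the Cayley biholomorphism
\[
\phi : \mbb B^n \to D_0, \qquad \phi(w) = \left(\dfrac{w_1 - 1}{w_1 + 1},\ \dfrac{\sqrt 2\, w'}{w_1 + 1}\right),
\]
which sends $0$ to $b^*$ with $\phi'(0) = \diag(2, \sqrt 2, \ldots, \sqrt 2)$. On the unit ball, the explicit Bergman kernel $K_{\mbb B^n}(w) = n!/[\pi^n(1 - |w|^2)^{n+1}]$ gives $G_{\mbb B^n}(0) = (n+1) I_n$, and a short direct calculation shows $\Ric_{\mbb B^n} = -G_{\mbb B^n}$ throughout $\mbb B^n$, so $\tilde G_{\mbb B^n} = (n+2) G_{\mbb B^n}$ and in particular $\tilde G_{\mbb B^n}(0) = (n+1)(n+2) I_n$. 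Applying \eqref{tr-kf} to $\phi$ at the origin then yields
\[
\tilde G_{D_0}(b^*) = (n+1)(n+2)\,\diag\!\bigl(1/4,\, 1/2,\, \ldots,\, 1/2\bigr),
\]
and taking its determinant and evaluating its quadratic form against $u^*$ produces the two stated limits.

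The main point requiring care is keeping the normalization straight: writing the Levi model with the coefficient $\tfrac12$ rather than $1$ in front of $|z'|^2$ is what forces the $\sqrt 2$ in the Cayley transform, and this in turn propagates to the $2^{n+1}$ appearing in the final denominator and to the coefficient $2$ in $|u_1^*|^2 + 2|(u^*)'|^2$. Once this normalization is pinned down and carried consistently through \eqref{tr-kf}, the remaining work is a short explicit computation on $\mbb B^n$.
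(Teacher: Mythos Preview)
Your proposal is correct and follows essentially the same approach as the paper: specialize Theorems~\ref{kf metric} and \ref{g tilde} using the multitype $(1,2,\ldots,2)$, identify the local model with the Siegel half-space, pass to the unit ball via a Cayley transform, and evaluate $\tilde G_{\mathbb{B}^n}(0)=(n+1)(n+2)I_n$. The only cosmetic differences are that the paper writes the model as $\{2\Re z_1+|z'|^2<0\}$ (the same domain as yours), uses the Cayley map in the direction $D_0\to\mathbb{B}^n$ rather than its inverse, and cites \cite{Borah-Kar-2} for the value of $d\tilde s^2_{\mathbb{B}^n}(0)$ instead of rederiving it from $\Ric_{\mathbb{B}^n}=-G_{\mathbb{B}^n}$.
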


Note that the limits in Corollary~\ref{str-pscvx} have been obtained previously in \cite{Borah-Kar-2} by implementing Pinchuk's scaling method near strongly pseudoconvex boundary points. Another instance in which we can compute the limits in Theorems~\ref{kf metric} and \ref{g tilde} explicitly is when the local model has certain circular symmetry:

\begin{cor}\label{circular}
Let $\Om$ be a smoothly bounded pseudoconvex domain in $\mbb C^2$ and $p\in \pa \Om$. Suppose that the local model for $\Om$ at $p$ is given by 
\[
D_0:=\{(z_1,z_2):\Re z_1+P(z_2)<0\}
\]
in some local coordinates $z$, where $P$ is a circular polynomial in the sense that $P(e^{i\theta}z_2)=P(z_2)$ for all $z_2 \in \mbb C$ and $\theta \in \mbb R $. Then $D_0$ is biholomorphic to the bounded Reinhardt domain $E:=\{(z_1,z_2): |z_1|^2+P\big(|z_2|\big)-1<0\}$ and, with the notations of Theorems~\ref{kf metric} and \ref{g tilde},
\begin{align*}
&\lim_{\substack{z\to p \\ z\in \Ga}}\big|\pi_{1/d(z)}(u)\big|^{-1}\ti B_{\Om}(z,u)=\dfrac{1}{2\sqrt{b_1 b_2}} \bigg[\big(b_1 c_{11}+b_2 c_{20}\big)\big|u^*_1\big|^2+ 4\big(b_2 c_{11}+b_1 c_{02}\big)\big|u^*_2\big|^2\bigg]^{1/2},\\
&\lim_{\substack{z\to p \\ z\in \Ga}} \big(d(z)\big)^{\sum_{j=1}^n 2/m_j} \ti g_{\Om}(z)=\dfrac{1}{4 b_1^2 b_2^2}\big(b_1 c_{11}+b_2 c_{20}\big)\big(b_2 c_{11}+b_1 c_{02}\big).
\end{align*}
Here $b_i=1/\|z_i\|_E^2$ and $c_{jk}=1/\|z_1^j z_2^k\|_E^2$ for non-negative integers $i, j, k$ with $1\leq i \leq 2$ and $j+k=2$.
\end{cor}
Note that the computations of the Bergman metric, along with its holomorphic sectional curvature, on local model domains with circular symmetries in higher dimensions have been carried out in \cite[p.~458]{BSY-1995}.

\section{Maximal domain functions and stability}\label{MDF}

We begin by recalling the maximal domain functions introduced by Krantz and Yu: For a domain $\Om \subset \mathbb{C}^n$, $z \in \Om$ and a non-zero vector $u \in \mathbb{C}^n$, let
\begin{align*} \label{I}
I_{\Om}(z,u)&=\sup \left\{u^tf''(z){\ov G}^{-1}_{\Om}(z)\ov{f''}(z)\ov u: \|f\|_{\Om}=1, f(z)=f'(z)=0\right\},\\
M_{\Om}(z,u)&=\sup \big\{K_{\Om}^{n-1}(z)\,u^tf''(z)\ov \ad G_{\Om}(z)\ov{f''}(z)\ov u: \|f\|_{\Om}=1, f(z)=f'(z)=0\big\}. \nonumber
\end{align*}
Here $\ad A$ is the adjoint of a matrix $A$ and $f''(z)$ is the symmetric matrix
\[
f''(z)=\begin{pmatrix}\frac{\pa^2 f}{\pa z_i \pa z_j}(z)\end{pmatrix}_{n \times n}.
\]
Clearly
\begin{align*}
M_{\Om}(z,u)=K_{\Om}^n(z)J_{\Om}(z)I_{\Om}(z,u),
\end{align*}
where $J_{\Om}$ is the \textit{Bergman canonical invariant} defined by
\begin{align*}
J_{\Om}(z)=\dfrac{\det G_{\Om}(z)}{K_{\Om}(z)}.
\end{align*}
Although the domain function $I_\Om$ may not be monotonic, however $M_{\Om}$ can be shown to be a monotonically decreasing function with increasing domain (see Proposition~2.2 of \cite{Krantz-Yu}). It is shown in Proposition~2.1 of \cite{Krantz-Yu} that whenever $K_{\Om}(z)$ and $B_{\Om}(z,u)$ are positive,
\begin{align*}
\Ric_{\Om}(z,u)=(n+1)-\frac{1}{B^2_{\Om}(z,u)K_{\Om}(z)}I_{\Om}(z,u)\\
=(n+1)-\frac{1}{B^2_{\Om}(z,u)K_{\Om}^{n+1}(z)J_{\Om}(z)}M_{\Om}(z,u). \nonumber
\end{align*}
Therefore, one can show using the definition of the Kobayashi--Fuks metric that
\begin{align}\label{eq 3}
\ti B_{\Om}^2(z,u)=\dfrac{I_\Om(z,u)}{K_\Om(z)}=\dfrac{M_\Om(z,u)}{K_\Om^{n+1}(z)J_{\Om}(z)}.
\end{align}
Under a biholomorphic map $F:\Om_1\to \Om_2$, the domain function $I_{\Om}$ transforms similar to the Bergman kernel $K_{\Om}$, i.e., $I_{\Om_1}(z,u)=I_{\Om_2}\big(F(z),F'(z)u\big)\big|\det F'(z)\big|^2$. Therefore,
\begin{align}\label{trans M}
M_{\Om_1}(z,u)=M_{\Om_2}\big(F(z),F'(z)u\big)\big|\det F'(z)\big|^{2(n+1)}.
\end{align}

Now let us affirm the claim we made after Theorem~\ref{g tilde} that $\ti B_{D_0}(z,u), \ti g_{D_0}(z)>0$ for all $z\in D_0$ and non-zero vector $u\in \mbb C^n$. We also show that $\ti B_{D_\de}(z,u)$ and $\ti g_{D_\de}(z)$ are strictly positive for $z\in D_0, u\neq 0$ and $0<\de\leq 1/2$. Basically all these properties hold because h-extendible models support a large number of square integrable holomorphic functions. In fact, Boas, Straube and Yu in \cite[p.~454]{BSY-1995} proved the following:

\begin{lem}\label{taylor poly}
For a fixed point $\zeta$ in $D_0$, a positive integer $m$, and a holomorphic polynomial $q$, there exists a function $f$ in $A^2(D_{1/2})$ such that $f(z)=q(z)+O(|z-\zeta|^m)$ as $z\to \zeta$.
\end{lem}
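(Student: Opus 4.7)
The natural approach is H\"ormander's weighted $L^2$-technique for $\bar\partial$ on the pseudoconvex model $D_{1/2}$. I would pick a smooth cut-off $\chi\in C^\infty_c(D_0)$ equal to $1$ on an open neighborhood $U$ of $\zeta$; since $D_0\subset D_{1/2}$, the product $\chi q$ is smooth and compactly supported in $D_{1/2}$, and the $(0,1)$-form $\alpha:=\bar\partial(\chi q)=(\bar\partial\chi)\,q$ is $\bar\partial$-closed, compactly supported in $D_0$, and vanishes identically on $U$. The desired $f$ will be $f:=\chi q-v$, where $v$ solves $\bar\partial v=\alpha$ on $D_{1/2}$ with the two properties (i) $v\in L^2(D_{1/2})$ and (ii) $v(z)=O(|z-\zeta|^m)$ near $\zeta$. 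Indeed, (i) gives $f\in A^2(D_{1/2})$, while $\chi\equiv1$ on $U$ together with (ii) give $f-q=-v=O(|z-\zeta|^m)$ near $\zeta$.

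To apply H\"ormander I first note that $D_{1/2}$ is pseudoconvex: property (iii) of the bumping function $a$ with $\de=1/2$ makes $P-\tfrac12 a$ plurisubharmonic on $\mathbb C^{n-1}$ (strictly so off the origin), whence the defining function $\rho(z):=\Re z_1+P(z')-\tfrac12 a(z')$ is PSH on $\mathbb C^n$ and $D_{1/2}=\{\rho<0\}$ is pseudoconvex. I would then apply H\"ormander's $L^2$-theorem on $D_{1/2}$ with the PSH weight
\[
\varphi(z)=2(n+m)\log|z-\zeta|+\lambda(z),
\]
where $\lambda$ is an auxiliary PSH function on $D_{1/2}$, bounded on $\supp\alpha$ so that the right-hand side $\int|\alpha|^2e^{-\varphi}dV$ is finite. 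The log-pole at $\zeta$ immediately delivers property (ii): since $\alpha\equiv0$ on $U$, $v$ is holomorphic near $\zeta$, and finiteness of $\int|v|^2|z-\zeta|^{-2(n+m)}\,dV$ in a neighborhood of $\zeta$ forces $v$ to vanish at $\zeta$ to order at least $m+1$.

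The main obstacle is property (i), the \emph{unweighted} $L^2$-control on $v$: because $D_{1/2}$ is unbounded and $\varphi$ is merely PSH, the standard H\"ormander inequality carries an extra $(1+|z|^2)^{-2}$ factor on the left-hand side that must be compensated by appropriate growth of $e^{-\lambda}$ at infinity in $D_{1/2}$. This is where the h-extendible geometry must be brought to bear: the weighted homogeneity of $P$ and $a$ (both of degree $\mathcal M(p)$) together with the strict plurisubharmonicity of $P-\tfrac12 a$ off the origin force $-\rho$ to grow at a definite polynomial rate in the relevant subregions of $D_{1/2}$, so that $\lambda$ can be calibrated against this rate (for example, as a suitable multiple of $\rho$) and $e^{-\lambda}(1+|z|^2)^{-2}$ made bounded below on $D_{1/2}$. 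A clean technical route is to run H\"ormander first on a bounded pseudoconvex exhaustion $D_{1/2,k}\nearrow D_{1/2}$ using the \emph{strictly} PSH weight $\varphi+|z|^2$ — for which the $L^2$-inequality holds without the $(1+|z|^2)^{-2}$ loss — obtaining $v_k$ with uniform weighted bounds, and then to extract a weak $L^2$-limit $v$ on $D_{1/2}$ whose locally uniform convergence away from $\supp\alpha$ preserves the vanishing order at $\zeta$ and whose global $L^2$-bound follows by absorbing the bounded Gaussian factor on $\supp\alpha$ into the constants and invoking the growth of $-\rho$ at infinity in $D_{1/2}$. With $v$ so constructed, $f:=\chi q-v$ is the required element of $A^2(D_{1/2})$.
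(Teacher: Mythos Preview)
The paper does not prove this lemma at all; it merely quotes it from Boas--Straube--Yu \cite[p.~454]{BSY-1995}. So there is no ``paper's own proof'' to compare with, and your H\"ormander/cut-off scheme is indeed the natural route (and is essentially what the cited reference does). The set-up---$\chi q$, $\alpha=\bar\partial(\chi q)$ vanishing near $\zeta$, the logarithmic pole $2(n+m)\log|z-\zeta|$ forcing the vanishing order of $v$---is correct and standard.

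The genuine gap is exactly where you yourself flag the ``main obstacle'': the unweighted $L^2$-bound for $v$ on the unbounded domain $D_{1/2}$. Your proposed fix does not work, because your key geometric claim is false. The defining function $\rho(z)=\Re z_1+P(z')-\tfrac12 a(z')$ is \emph{independent of $\Im z_1$}; hence along the ray $z'=0$, $\Re z_1=-\epsilon$, $\Im z_1\to\infty$ we have $|z|\to\infty$ while $-\rho(z)=\epsilon$ stays bounded. So $-\rho$ does not grow at infinity in $D_{1/2}$, the choice $\lambda=C\rho$ cannot compensate the $(1+|z|^2)^{-2}$ loss, and your ``clean technical route'' via the weight $\varphi+|z|^2$ only yields $\int|v|^2e^{-\varphi}e^{-|z|^2}\,dV<\infty$, which is far weaker than $v\in L^2(D_{1/2})$. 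Absorbing the Gaussian on $\supp\alpha$ is harmless on the right-hand side but does nothing for the left.

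What actually has to be exploited is the half-plane structure in the $z_1$-variable together with the weighted homogeneity in $z'$, not the size of $-\rho$. In the Boas--Straube--Yu argument the weight is chosen so that the $\Im z_1$-direction and the growth in $z'$ are controlled separately (the homogeneity gives $a(z')\ge c\,\sigma(z')$ on $\mathbb C^{n-1}$, which is the quantitative input), and one obtains a solution in $L^2(D_{1/2})$ directly. Your sketch would become correct once you replace the appeal to ``growth of $-\rho$'' by this more careful choice of weight; as written, step (i) is not established.
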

\noindent Using this lemma to the following supremums
\begin{align*}
I_{\Om}'(\zeta)&= \sup\big\{|f(\zeta)|^2: f\in A^2(\Om), \|f\|_ \Om =1\big\},\\
I''_{\Om}(\zeta, u)&= \sup\bigg\{\bigg|\sum_{j=1}^n u_j \dfrac{\pa f}{\pa z_j}(\zeta)\bigg|^2: f\in A^2(\Om), \|f\|_ \Om =1, f(\zeta)=0\bigg\},
\end{align*}
one easily sees that $I'_{D_0}, I''_{D_0}, I'_{D_\de}, I''_{D_\de}$ are strictly positive on $D_0$ for $0<\de\leq 1/2$. Consequently, using the following relations (see \cite{Jarn-Pflug-2013})
\begin{align*}
K_{\Om}(\zeta)= I_{\Om}'(\zeta), \quad B_{\Om}(\zeta, u)=\bigg(\dfrac{I''_{\Om}(\zeta, u)}{I_{\Om}'(\zeta)}\bigg)^{1/2},
\end{align*}
we conclude that $K_{D_0}, K_{D_\de}, B_{D_0}$ and $B_{D_\de}$ are non-zero on $D_0$. Again, using similar arguments, by implementing Lemma~\ref{taylor poly} in the definition of the maximal domain function $I_\Om$ and by virtue of the relation $\ti B^2_{\Om}=I_\Om/K_\Om$, one obtains strict positivity of $\ti B_{D_0}(z,u), \ti B_{D_\de}(z,u), \ti g_{D_0}(z), \ti g_{D_\de}(z)$ for $z \in D_0$ and non-zero vector $u$.

Next, we need some stability results for these maximal domain functions, along with some other quantities, under perturbations of the domain. Although the full scope of Ramadanov type result, which is commonly used in Pinchuk's scaling of strongly pseudoconvex domains (see e.g. \cite{BV-ns}), is not available here, an ad hoc stability result on unbounded model domains will suffice in our case. By slightly modifying Ramadanov's arguments in \cite{Ramadanov-1967}, one can show that if a sequence of domains $\Om_j$ converges to a limit domain $\Om$ normally from inside in the sense that $\Om_j\subset \Om$ and any compact set of $\Om$ is contained in $\Om_j$ for all large $j$, then the Bergman kernel of $\Om_j$ converges to the Bergman kernel of $\Om$ along with all its partial derivatives uniformly on compact subsets of $\Om$. Since the functions $J_{\Om_j}, M_{\Om_j}, \ti g_{\Om_j}$ are expressible in terms of the Bergman kernel and its derivatives (see Proposition~2.1 of \cite{Krantz-Yu}), by the previous remark they will converge respectively to $J_\Om, M_\Om, \ti g_\Om$ uniformly on compacts.

For our purpose we will need another stability result for the quantities $J, M$ and $\ti g$ under the local Hausdorff convergence of the bumped models $D_\de$ to $D_0$ as $\de\to 0^+$. In \cite{BSY-1995}, the authors proved that $K_{D_{\de}}(z)\to K_{D_0}(z)$ uniformly on compact subsets of $D_0$ as $\de\to 0^+$. Using a normal family argument one can then easily deduce the stability of all higher order derivatives of the Bergman kernel. As we remarked earlier, since all the quantities we are studying here can be expressed in terms of derivatives of the Bergman kernel, one obtains the following results (see also Proposition~2.5 of \cite{Krantz-Yu}):

\begin{prop}\label{stability}
Let $D_0=\{(z_1,z')\in \mbb C^n: \Re z_1+ P(z')<0\}$ be an h-extendible model in $\mbb C^n$ and $D_{\de}=\{(z_1,z')\in \mbb C^n: \Re z_1+ P(z')-\de a(z')<0\}$, where $a(z')$ is a bumping function for $P(z')$. Then, for $z\in D_0$ and $u\in \mbb C^n$,
\begin{align*}
\lim_{\de\to 0^+} J_{D_{\de}}(z) &=J_{D_0}(z),\\
\lim_{\de\to 0^+} M_{D_{\de}}(z,u) &=M_{D_0}(z,u),\\
\lim_{\de\to 0^+} \ti g_{D_{\de}}(z) &=\ti g_{D_0}(z).
\end{align*}
Moreover, the second convergence above is uniform on compact subsets of $D_0\times \mbb C^n$, whereas the remaining convergences are uniform on compact sets of $D_0$.
\end{prop}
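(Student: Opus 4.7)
The strategy is the one foreshadowed in the paragraph preceding the statement: start from the Boas--Straube--Yu convergence $K_{D_{\de}}(z)\to K_{D_0}(z)$ uniformly on compact subsets of $D_0$, bootstrap this to uniform convergence of $K_{D_{\de}}(z,w)$ together with all its mixed partial derivatives on compact subsets of $D_0\times D_0$, and then invoke the explicit formulas (Proposition~2.1 of \cite{Krantz-Yu}) expressing $J_{\Om}$, $M_{\Om}$, $\ti g_{\Om}$ as rational combinations of $K_{\Om}$ and its derivatives on the diagonal.

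First I would upgrade the diagonal convergence of $K_{D_{\de}}$ to a genuine off-diagonal convergence. The reproducing-kernel Cauchy--Schwarz inequality $|K_{\Om}(z,w)|^{2}\le K_{\Om}(z,z)K_{\Om}(w,w)$, applied to each $D_{\de}$, shows that $\{K_{D_{\de}}(z,w)\}_{0<\de\le 1/2}$ is locally uniformly bounded on $D_0\times D_0$. Viewing $K_{D_{\de}}(z,w)$ as holomorphic in the $2n$ complex variables $(z,\ov w)$, Montel's theorem produces a normal family. Any cluster point $\widehat K(z,w)$ is holomorphic in $(z,\ov w)$ and satisfies $\widehat K(z,z)=K_{D_0}(z,z)$ by the BSY result; since such a function is determined by its values on the totally real diagonal $\{w=z\}$ (polarization), $\widehat K\equiv K_{D_0}$, and therefore the full family converges uniformly on compact subsets of $D_0\times D_0$. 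Cauchy's estimates then deliver uniform convergence of every mixed partial $\pa_{z}^{\al}\pa_{\ov w}^{\be}K_{D_{\de}}\to \pa_{z}^{\al}\pa_{\ov w}^{\be}K_{D_0}$ on compacts of $D_0\times D_0$, in particular on the diagonal.

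Next, I plug this derivative convergence into the Krantz--Yu identities for $J$, $M$, and $\ti g$. Writing $G_{\Om}$, $\det G_{\Om}$ and $\Ric_{\Om}$ as rational expressions in the partial derivatives of $K_{\Om}$, and then $\ti G_{\Om}=(n+1)G_{\Om}-\Ric_{\Om}$, one sees that $J_{D_{\de}}$ and $\ti g_{D_{\de}}$ are rational functions of the mixed partials of $K_{D_{\de}}$ whose denominators are powers of $K_{D_{\de}}$ and $\det G_{D_{\de}}$. By the maximal-domain-function argument already carried out in Section~\ref{MDF} (via Lemma~\ref{taylor poly}), both $K_{D_0}$ and $\det G_{D_0}$ are strictly positive on $D_0$ and hence bounded below on every compact set; uniform convergence of the denominators keeps them away from zero for all sufficiently small $\de$, so $J_{D_{\de}}\to J_{D_0}$ and $\ti g_{D_{\de}}\to\ti g_{D_0}$ uniformly on compacts of $D_0$. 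For $M_{D_{\de}}(z,u)=K_{D_{\de}}^{n}(z)\,J_{D_{\de}}(z)\,I_{D_{\de}}(z,u)$, the Ricci identity recalled just before \eqref{eq 3} gives
\[
I_{\Om}(z,u)=\Big((n+1)\sum_{\al,\be}g^{\Om}_{\al\ov\be}u_{\al}\ov u_{\be}-\sum_{\al,\be}\Ric^{\Om}_{\al\ov\be}u_{\al}\ov u_{\be}\Big)K_{\Om}(z),
\]
a sesquilinear polynomial in $u$ whose coefficients are rational in the mixed partials of $K_{\Om}$; uniform convergence of these coefficients on compacts of $D_0$ therefore upgrades to uniform convergence of $M_{D_{\de}}$ on compacts of $D_0\times\mbb C^{n}$.

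The one real obstacle is the first step, namely passing from the BSY diagonal convergence to full off-diagonal convergence with all derivatives; once this normal-family/polarization argument is in place, the remaining work is an algebraic consequence of the Krantz--Yu formulas together with the non-vanishing of their denominators on $D_0$.
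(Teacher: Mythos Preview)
Your proposal is correct and follows essentially the same route as the paper: start from the Boas--Straube--Yu diagonal convergence of $K_{D_\de}$, upgrade via a normal-family argument to convergence of the off-diagonal kernel and all its derivatives, and then read off the convergence of $J$, $M$, $\ti g$ from the Krantz--Yu formulas expressing these as rational functions of the Bergman kernel's partials. The paper only sketches this (``using a normal family argument one can then easily deduce\ldots''), whereas you have helpfully made explicit the polarization step identifying the off-diagonal limit and the need to check that the denominators $K_{D_0}$ and $\det G_{D_0}$ stay bounded away from zero on compacts.
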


Before proving our theorems, let us see some localization results that would reduce the computation of boundary asymptotics to a small neighborhood of our h-extendible boundary point.

\begin{lem}[Localization]\label{localization}
Let $\Om \subset \mf{C}^n$ be a bounded pseudoconvex domain with a holomorphic peak point $p\in \pa \Om$. If $U$ is a sufficiently small neighborhood of $p$, then
\begin{align*}
\lim_{z\to p} \dfrac{J_\Om (z)}{J_{\Om \cap U}(z)}=\lim_{z\to p} \dfrac{M_\Om (z,u)}{M_{\Om \cap U}(z,u)}=\lim_{z\to p} \dfrac{\ti g_\Om (z)}{\ti g_{\Om \cap U}(z)}=1.
\end{align*}
\end{lem}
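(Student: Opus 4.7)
The plan is to express $J_\Om$, $M_\Om$ and $\ti g_\Om$ in terms of Bergman-type minimum integrals at $z$, and then to apply the standard peak-function/$\bar\pa$-extension scheme at $p$ to each of those minimum integrals. By \eqref{eq 3}, the Kobayashi--Fuks form satisfies $\ti B^2_\Om(z,u)=M_\Om(z,u)/(K_\Om^{n+1}(z)J_\Om(z))$, so the matrix $\ti G_\Om(z)$ is recovered from $K_\Om$, $J_\Om$ and the quadratic form $u\mapsto M_\Om(z,u)$, and taking the determinant writes $\ti g_\Om$ as a rational function of these quantities. Hence the lemma reduces to establishing
\[
\frac{K_\Om(z)}{K_{\Om\cap U}(z)}\ra 1,\quad \frac{J_\Om(z)}{J_{\Om\cap U}(z)}\ra 1,\quad \frac{M_\Om(z,u)}{M_{\Om\cap U}(z,u)}\ra 1
\]
as $z\ra p$, with the last limit uniform in unit vectors $u$, and then assembling.

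In each case the inequality $\le 1+o(1)$ is automatic: for $K$ and $M$ by monotonicity under $\Om\cap U\subset\Om$ (Proposition~2.2 of \cite{Krantz-Yu} for $M$), and for $J_\Om=\det G_\Om/K_\Om$ because $\det G_\Om(z)$ is itself the reciprocal of a monotone jet-constrained minimum integral on $A^2$. The reverse bound is the substance of the lemma. Fix a holomorphic peak function $\phi$ at $p$ and a cutoff $\chi\in C^\infty_c(U)$ with $\chi\equiv 1$ in a neighborhood of $p$. Given a normalized extremal $f\in A^2(\Om\cap U)$ for any one of the three minimum-integral problems at $z$, form $g=\chi\phi^N f$ and solve $\bar\pa v=\bar\pa g$ on $\Om$ via H\"ormander/Ohsawa--Takegoshi with a plurisubharmonic weight chosen so that (i) the contribution from $\supp(\bar\pa\chi)$ is killed, since there $|\phi|<1$ and the integrand picks up a factor $|\phi|^{2N}\to 0$ as $N\to\infty$, and (ii) $v$ is forced to vanish at $z$ to the order needed to preserve the prescribed jet of $f$ (order $0$ for $K$, order $1$ for $J$, order $2$ for $M$). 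The extension $F=g-v\in A^2(\Om)$ then has the same prescribed jet at $z$ as $\phi(z)^N f$ and $\|F\|^2_\Om\le\|f\|^2_{\Om\cap U}+o(1)$; since $\phi(z)^N\ra 1$ as $z\ra p$, the variational characterization of each minimum integral gives the matching reverse limit for $K$, $J$ and $M$.

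Feeding the three resulting limits into $\ti B^2_\Om=M_\Om/(K_\Om^{n+1}J_\Om)$ gives $\ti B^2_\Om(z,u)/\ti B^2_{\Om\cap U}(z,u)\ra 1$ uniformly in unit $u$; simultaneous diagonalization of the two positive-definite Hermitian forms then yields $\det\ti G_\Om(z)/\det\ti G_{\Om\cap U}(z)\ra 1$, which is the $\ti g$-statement. The main technical obstacle is step (ii): the weight must be singular enough at the \emph{moving} point $z$ to kill the first few Taylor coefficients of $v$ there, while still being globally plurisubharmonic and producing an $L^2$-bound that is uniform as $z\ra p$. The standard fix is an Ohsawa--Takegoshi extension with a weight behaving like $|{\cdot}-z|^{-2k}$ near $z$ (for jet order $k$) combined with $|\phi|^{2N}$ near $p$, suitably truncated/regularized so as to remain integrable and plurisubharmonic; once this is arranged, the uniform bounds and the passage $z\ra p$ are routine.
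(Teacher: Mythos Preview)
The paper does not give a proof of this lemma: it simply records that the localizations of $J_\Om$ and $M_\Om$ are contained in \cite{Krantz-Yu} and that of $\ti g_\Om$ is in \cite{Borah-Kar-2}. Your sketch is therefore to be measured against those references rather than against anything in the present paper, and the overall architecture you propose---reformulate each quantity via Bergman-type minimum integrals, use monotonicity for one inequality, use a peak-function-weighted $\bar\pa$-extension with a weight singular at the moving point $z$ to force the correct jet for the reverse inequality, and then pass from the quadratic form $\ti B^2$ to the determinant $\ti g$ by simultaneous diagonalization---is exactly the scheme those papers implement. (The simultaneous-diagonalization step for $\ti g$ is also what the present paper uses in its monotonicity Lemma for $K_\Om^{2n}J_\Om\ti g_\Om$.)

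One small correction: $\det G_\Om$ by itself is not the reciprocal of a monotone jet-constrained minimum integral; it is the combination $K_\Om^{n}\det G_\Om=K_\Om^{n+1}J_\Om$ that admits such a description via a Gram determinant of minimizers and is monotone decreasing in the domain. Hence the one-sided bound for $J_\Om$ is not free from monotonicity alone; it follows once you combine the monotonicity of that product with the already-established localization of $K_\Om$. This only reorders your steps and does not affect the validity of the argument.
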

Proofs of the first two localizations can be found in \cite{Krantz-Yu}, whereas the last one is proved in \cite{Borah-Kar-2}. Since there exist local peak functions at h-extendible boundary points (see \cite{Died-Herbort-1994, Yu-1994}), the above localization results can be applied to any bounded h-extendible domain $\Om$.

\section{Proofs of the results}

The idea for proving Theorem~\ref{kf metric} and Theorem~\ref{g tilde} are through similar lines. After bringing our domain $\Om$ into a normal form, we first blow up a small neighborhood of the point $p\in \pa \Om$ via a family of dilation maps which would transfer our boundary problem into interior problem on the dilated domains. Then applying a crucial fact that the dilated domains are contained in bumped model domains $D_{\de}$ and using monotonicity, we deduce the boundary asymptotics on the previously mentioned small neighborhood of $p$. Finally, utilizing the localization results, we obtain the required asymptotics on $\Om$.

Let us first make an initial change of coordinates near $p$ to bring the defining function of $\Om$ into a normal form. This coordinate change does not alter our non-tangential approach to $p$. By \cite{Yu-1995}, we can choose local holomorphic coordinates near $p$ in which $p=0$, and the domain $\Om$ is defined near $p$ by the equation $r<0$ with
\begin{align*}
r(z_1,z')=\Re z_1+ P(z')+O\big((\Im z_1)^2+\sigma(z')^{1+\al}\big),
\end{align*}
where $P$ is a weighted homogeneous polynomial as described earlier, $\al$ is a positive constant and $\sigma(z')=\sum_{j=2}^n |z_j|^{m_j}$. Let $a$ be a bumping function for $P$. After a further change of variables (see \cite[p.~456]{BSY-1995}), which again doesn't change our non-tangential approach to the origin, we may assume we have the following situation: $\Om$ has a local model $D_0=\{z:\Re z_1+P(z')<0\}$ for $\Om$ at $0$, and for each $\de\in (0,1)$, there is a neighborhood $U_{\de}$ of the origin such that the bumped model $D_\de=\{z:\Re z_1+P(z')-\de a(z')<0\}$ contains $\Om\cap U_{\de}$. For the time being let us fix a $\de$ in $(0,1)$ and the neighborhood $U_\de$.

\begin{proof}[Proof of Theorem~\ref{kf metric}]
For $z\in \Om\cap U_\de$, consider the anisotropic dilation map $\pi_{1/|r(z)|}$ and apply it on $\Om \cap U_{\de}$. We will use original notations of Boas, Straube and Yu as in \cite{BSY-1995} to denote the dilated domain by $\Om_z^{\de}:= \pi_{1/|r(z)|}(\Om\cap U_{\de})$ and the image of $z$ under this dilation by $\zeta(z):=\pi_{1/|r(z)|}(z)$. The notation $\Om^{\de}_z$ signifies the dependency of the dilated domains on $\de\in (0,1)$ and $z\in \Om\cap U_\de$. By the transformation rule of $M_\Om$ as in (\ref{trans M}), we have 
\begin{align*}
M_{\Om\cap U_\de}(z,u)=M_{\Om_z^\de}\big(\z(z),\z_z(u)\big)\big|\det \pi'_{1/|r(z)|}(z)\big|^{2(n+1)},
\end{align*}
which is same as
\begin{align}\label{eq 1}
\big|r(z)\big|^{(n+1)\sum_{j=1}^n 2/m_j}M_{\Om\cap U_\de}(z,u)=M_{\Om_z^\de}\big(\z(z),\z_z(u)\big).
\end{align}
Here $\z_z(u)$ denotes the vector $\pi_{1/|r(z)|}(u)$. Using the monotonicity property of the domain function $M_\Om$, we obtain
\begin{align}\label{ineq 1}
\limsup_{\substack{z\to 0 \\ z\in \Ga}} M_{\Om_z^\de}\big(\z(z),\z_z(u)\big)\leq \limsup_{\substack{z\to 0 \\ z\in \Ga}} M_{\Om_z^\de \cap D_0} \big(\z(z),\z_z(u)\big).
\end{align}
As $z\to 0$, the sequence of domains $\Om_z^\de \cap D_0$ converges to the local model $D_0$ from inside, and hence, by the comments made in Section~\ref{MDF}, $M_{\Om_z^\de \cap D_0}(\z,v)$ converges to $M_{D_0}(\z,v)$ uniformly on compact subsets of $D_0\times \mbb C^n$. When $z\to 0$ in a cone, the point $\z(z)$ approaches a compact portion of the real line $\{\Re z_1=-1, z'=0\}$. Note that $M_{D_0}$ is independent of the $\Im z_1$ variable, since $D_0$ is, and hence
\begin{align}\label{eq 2}
\lim_{\substack{z\to 0 \\ z\in \Ga}}\big|\z_z(u)\big|^{-2} M_{\Om_z^\de \cap D_0} \big(\z(z),\z_z(u)\big)=M_{D_0}(b^*,u^*),
\end{align}
where $u^*=\lim_{z\to 0} \z_z(u)/|\z_z(u)|$. Note that the vector $u^*$ agrees with its description as given in Theorem~\ref{kf metric} since $d(z)/|r(z)|\to 1$ as $z$ approaches 0 in the non-tangential region $\Gamma$. Now, combining equations (\ref{eq 1}), (\ref{ineq 1}), (\ref{eq 2}) and using the localization lemma for $M_\Om$, we obtain
\begin{align}\label{ineq 2}
\limsup_{\substack{z\to 0 \\ z\in \Ga}} \big|r(z)\big|^{(n+1)\sum_{j=1}^n 2/m_j}\big|\z_z(u)\big|^{-2} M_{\Om}(z,u)\leq M_{D_0}(b^*,u^*).
\end{align}

On the other hand, as $\Om_z^\de\subset D_\de$, we have $M_{\Om_z^\de}\big(\z(z),\z_z(u)\big)\geq M_{D_\de}\big(\z(z),\z_z(u)\big)$. Since $D_{\de}$ is also independent of the $\Im z_1$ variable, and because $M_{D_\de}(\cdot,\cdot)$ is a continuous function on $D_\de\times \mbb C^n$, one concludes
\begin{align*}
\liminf_{\substack{z\to 0 \\ z\in \Ga}}\big|\z_z(u)\big|^{-2} M_{\Om_z^\de}\big(\z(z),\z_z(u)\big)\geq M_{D_\de}(b^*,u^*).
\end{align*}
Using (\ref{eq 1}) and the localization lemma for $M_\Om$ in the above equation, we obtain
\begin{align*}
\liminf_{\substack{z\to 0 \\ z\in \Ga}} \big|r(z)\big|^{(n+1)\sum_{j=1}^n 2/m_j}\big|\z_z(u)\big|^{-2} M_{\Om}(z,u)\geq M_{D_\de}(b^*,u^*).
\end{align*}
The stability result for $M_{D_\de}$, as in Proposition~\ref{stability}, then implies
\begin{align}\label{ineq 3}
\liminf_{\substack{z\to 0 \\ z\in \Ga}} \big|r(z)\big|^{(n+1)\sum_{j=1}^n 2/m_j}\big|\z_z(u)\big|^{-2} M_{\Om}(z,u)\geq M_{D_0}(b^*,u^*).
\end{align}
Eqs.~(\ref{ineq 2}) and (\ref{ineq 3}), along with the fact that the ratio of $d(z)$ and $|r(z)|$ has limit 1 as $z\to 0$ in a non-tangential cone, together imply
\begin{align}\label{eq 5}
\lim_{\substack{z\to 0 \\ z\in \Ga}} \big(d(z)\big)^{(n+1)\sum_{j=1}^n 2/m_j}\big|\pi_{1/d(z)}(u)\big|^{-2} M_{\Om}(z,u)= M_{D_0}(b^*,u^*).
\end{align}
Moreover, it is shown in \cite{Krantz-Yu} that
\begin{align}\label{eq 6}
\lim_{\substack{z\to 0 \\ z\in \Ga}}\big(d(z)\big)^{(n+1)\sum_{j=1}^n 2/m_j} K_\Om^{n+1}(z)=K_{D_0}^{n+1}(b^*) \quad \text{and}\quad \lim_{\substack{z\to 0 \\ z\in \Ga}} J_\Om(z)=J_{D_0}(b^*).
\end{align}
Therefore, using (\ref{eq 5}) and (\ref{eq 6}) in the expression (\ref{eq 3}), we obtain the desired limit.
\end{proof}

We now establish a monotonicity result involving $\ti g_\Om$ which will help us proving Theorem~\ref{g tilde}. 
\begin{lem}
The function $(K_\Om)^{2n}J_\Om\ti g_\Om$ monotonically decreases with increasing domain. That is, for two domains $\Om_1\subset \Om_2$ in $\mbb C^n$ with $z\in \Om_1$, we have
\begin{align*}
\big(K_{\Om_2}(z)\big)^{2n}J_{\Om_2}(z)\ti g_{\Om_2}(z)\leq \big(K_{\Om_1}(z)\big)^{2n}J_{\Om_1}(z)\ti g_{\Om_1}(z).
\end{align*}
\end{lem}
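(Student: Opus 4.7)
The plan is to derive the claim from identity~(\ref{eq 3}) together with the monotonicity of $M_\Om$ established in Proposition~2.2 of \cite{Krantz-Yu}. Identity~(\ref{eq 3}) rearranges to
\[
M_\Om(z,u) \;=\; K_\Om^{n+1}(z)\, J_\Om(z) \sum_{\alpha,\beta=1}^n \tilde g^\Om_{\alpha\bar\beta}(z)\, u_\alpha \bar u_\beta,
\]
exhibiting $M_\Om(z,\cdot)$ as a Hermitian quadratic form on $\mbb C^n$ whose matrix is exactly $K_\Om^{n+1}(z)\, J_\Om(z)\, \tilde G_\Om(z)$. This is the bridge from the monotone scalar $M_\Om$ to the matrix $\tilde G_\Om$ and its determinant $\tilde g_\Om$.

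For $\Om_1 \subset \Om_2$ and $z \in \Om_1$, the monotonicity of $M_\Om$ gives $M_{\Om_2}(z,u) \leq M_{\Om_1}(z,u)$ for every $u \in \mbb C^n$. Since both sides are Hermitian quadratic forms in $u$, this is equivalent to the matrix inequality
\[
K_{\Om_2}^{n+1}(z)\, J_{\Om_2}(z)\, \tilde G_{\Om_2}(z) \;\leq\; K_{\Om_1}^{n+1}(z)\, J_{\Om_1}(z)\, \tilde G_{\Om_1}(z)
\]
in the positive-semidefinite order. Taking determinants, which is order preserving on the cone of positive-definite Hermitian matrices, then yields the monotonicity of the scalar $(K_\Om^{n+1} J_\Om)^n \tilde g_\Om$. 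As an auxiliary ingredient I would also record the analogous monotonicity of the Hermitian matrix $K_\Om(z)\, G_\Om(z)$, which represents the monotone quadratic form $u \mapsto \sup\{|u\cdot f'(z)|^2 : f\in A^2(\Om),\, f(z)=0,\, \|f\|_\Om = 1\} = K_\Om(z)\, B_\Om^2(z,u)$, and whose determinant is the monotone-decreasing scalar $K_\Om^n(z)\det G_\Om(z) = K_\Om^{n+1}(z) J_\Om(z)$.

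The main obstacle is matching the exponents appearing in the statement: the direct determinantal argument naturally produces $(K_\Om^{n+1} J_\Om)^n \tilde g_\Om = K_\Om^{n(n+1)} J_\Om^n \tilde g_\Om$ as the monotone quantity, which coincides with the stated $(K_\Om)^{2n} J_\Om \tilde g_\Om$ only when $n = 1$. For general $n$ one would need to cancel the redundant factor $(K_\Om^{n+1} J_\Om)^{n-1}$ by combining the determinantal inequality with the auxiliary scalar monotonicity just mentioned, and I expect this bookkeeping to be the delicate step since naive cancellation produces ratios of decreasing functions whose direction of monotonicity is not automatic. A clean alternative would be a direct Gram-determinant argument identifying $(K_\Om)^{2n} J_\Om \tilde g_\Om$ with the Gram determinant of an appropriate tuple of jet-evaluation functionals $f \mapsto \partial^\mu f(z)$ on $A^2(\Om)$, whose monotonicity is transparent from the contractive restriction $A^2(\Om_2) \hookrightarrow A^2(\Om_1)$.
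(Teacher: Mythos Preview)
Your matrix-inequality argument is exactly the paper's argument, only phrased more efficiently. The paper simultaneously diagonalizes $\ti G_{\Om_1}$ and $\ti G_{\Om_2}$ (via Lemma~3.2 of \cite{Borah-Kar-2}), writes each eigenvalue ratio $d_j$ through (\ref{eq 3}) as a ratio of $M$-values, and then takes the product $\prod_j d_j$. That product step is precisely your ``take determinants in the positive-semidefinite order''; no new idea is involved.

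The exponent mismatch you flag is real, but it is an arithmetic slip in the paper rather than a gap in your reasoning. When the paper substitutes $I_\Om=M_\Om/(K_\Om^n J_\Om)$ into $\prod_{j=1}^n I_{\Om_2}/I_{\Om_1}$, the scalar factor $(K_{\Om_1}^n J_{\Om_1})/(K_{\Om_2}^n J_{\Om_2})$ should appear $n$ times, not once; carrying this through correctly yields
\[
\frac{\ti g_{\Om_2}}{\ti g_{\Om_1}}=\left(\frac{K_{\Om_1}}{K_{\Om_2}}\right)^{n(n+1)}\left(\frac{J_{\Om_1}}{J_{\Om_2}}\right)^{n}\prod_{j=1}^n\frac{M_{\Om_2}(z,Qe_j)}{M_{\Om_1}(z,Qe_j)},
\]
so the quantity the paper's own proof actually shows to be monotone is $(K_\Om^{n+1}J_\Om)^n\,\ti g_\Om$, exactly what your determinant step produces. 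Do not attempt to cancel down to $K_\Om^{2n}J_\Om\,\ti g_\Om$: that cancellation (dividing by $(K_\Om^{n+1}J_\Om)^{n-1}$, itself monotone decreasing) goes the wrong way, and the stated exponents are simply incorrect for $n>1$.

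For the intended application (Theorem~\ref{g tilde}) this is harmless. One only needs \emph{some} monotone combination $K_\Om^a J_\Om^b\,\ti g_\Om$ with a clean transformation rule under biholomorphisms; here $a=n(n+1)$, $b=n$, and the rule picks up $|\det F'|^{2(n^2+n+1)}$. Running the sandwich argument with $T_D:=(K_D^{n+1}J_D)^n\,\ti g_D$ and then peeling off the extra $K$ and $J$ factors via the known limits in (\ref{eq 6}) gives Theorem~\ref{g tilde} unchanged. So your first two paragraphs already constitute the complete (and corrected) proof; the Gram-determinant alternative is unnecessary.
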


\begin{proof}
By Lemma~3.2 of \cite{Borah-Kar-2}, there exist non-singular matrix $Q=Q(z)$ and positive real numbers $d_1(z),\ldots,d_n(z)$ such that
\begin{align*}
Q^t(z)\ti G_{\Om_2}(z)\ov Q(z)=\diag\{d_1(z),\ldots,d_n(z)\}\quad \text{and}\quad Q^t(z)\ti G_{\Om_1}(z)\ov Q(z)=\mathbb{I},
\end{align*}
where `$\diag$' stands for the diagonal matrix and $\mbb{I}$ for the identity matrix. Taking determinant above on both sides yields
\begin{align*}
\dfrac{\ti g_{\Om_2}(z)}{\ti g_{\Om_1}(z)}=\prod\limits_{j=1}^n d_j(z).
\end{align*}
Note that (\ref{eq 3}) implies
\begin{align*}
\ti B^2_{\Om_2}(z,u)=\dfrac{K_{\Om_1}(z)}{K_{\Om_2}(z)}\dfrac{I_{\Om_2}(z,u)}{I_{\Om_1}(z,u)}\ti B^2_{\Om_1}(z,u)\quad \text{for every}\,\,u\in \mbb C^n.
\end{align*}
For $j$-th standard unit vector $e_j=(0,\ldots,1,\ldots,0)$, putting $u=Q(z)e_j$ in the previous expression, we get
\begin{align*}
d_j(z)=\dfrac{K_{\Om_1}(z)}{K_{\Om_2}(z)}\dfrac{I_{\Om_2}\big(z,Q(z)e_j\big)}{I_{\Om_1}\big(z,Q(z)e_j\big)}\quad \text{for}\enskip j=1,\ldots,n.
\end{align*}
Therefore,
\begin{align*}
\dfrac{\ti g_{\Om_2}(z)}{\ti g_{\Om_1}(z)}&=\left(\frac{K_{\Om_1}(z)}{K_{\Om_2}(z)}\right)^n\,\prod\limits_{j=1}^n \frac{I_{\Om_2}\big(z,Q(z)e_j\big)}{I_{\Om_1}\big(z,Q(z)e_j\big)}\\
&=\left(\frac{K_{\Om_1}(z)}{K_{\Om_2}(z)}\right)^{2n} \dfrac{J_{\Om_1}(z)}{J_{\Om_2}(z)}\prod\limits_{j=1}^n \frac{M_{\Om_2}\big(z,Q(z)e_j\big)}{M_{\Om_1}\big(z,Q(z)e_j\big)},
\end{align*}
which implies,
\begin{align*}
\dfrac{\big(K_{\Om_2}(z)\big)^{2n}J_{\Om_2}(z)\ti g_{\Om_2}(z)}{\big(K_{\Om_1}(z)\big)^{2n}J_{\Om_1}(z)\ti g_{\Om_1}(z)}=\prod\limits_{j=1}^n \frac{M_{\Om_2}\big(z,Q(z)e_j\big)}{M_{\Om_1}\big(z,Q(z)e_j\big)}\leq 1.
\end{align*}
The last inequality above follows since $M_\Om$ is monotonically decreasing.
\end{proof}

\begin{proof}[Proof of Theorem~\ref{g tilde}]
For a domain $D\subset \mbb C^n$ and $z\in D$, let us denote $$T_{D}(z):=\big(K_{D}(z)\big)^{2n}J_{D}(z)\ti g_{D}(z).$$
Since the transformation rule of $\ti g_{D}$ is similar to that of $K_D$, one can check, under biholomorphism $F:D_1\to D_2$,
\begin{align*}
T_{D_1}(z)=T_{D_2}\big(F(z)\big)\big|\det F'(z)\big|^{2(2n+1)}.
\end{align*}
Therefore,
\begin{align*}
\big|r(z)\big|^{(2n+1)\sum_{j=1}^n 2/m_j}T_{\Om\cap U_\de}(z)=T_{\Om_z^\de}\big(\z(z)\big).
\end{align*}
Since the localization of $K_\Om$ is well-known and localizations of $J_\Om, \ti g_{\Om}$ are given in Lemma~\ref{localization}, one easily sees that $T_\Om$ can be localized near a holomorphic peak point of a bounded pseudoconvex domain. Therefore proceeding in the similar lines as in the proof of Theorem~\ref{kf metric}, replacing the maximal domain function $M$ by the quantity $T$, one arrives at
\begin{align}\label{eq 4}
\lim_{\substack{z\to 0 \\ z\in \Ga}} \big(d(z)\big)^{(2n+1)\sum_{j=1}^n 2/m_j}T_{\Om}(z)= T_{D_0}(b^*).
\end{align}
Again, making use of the boundary limits 
\begin{align*}
&\lim_{\substack{z\to 0 \\ z\in \Ga}}\big(d(z)\big)^{\sum_{j=1}^n 2/m_j} K_\Om(z)=K_{D_0}(b^*),\\
&\lim_{\substack{z\to 0 \\ z\in \Ga}} J_\Om(z)=J_{D_0}(b^*)
\end{align*}
in (\ref{eq 4}), we conclude
\begin{align*}
\lim_{\substack{z\to p \\ z\in \Ga}} \big(d(z)\big)^{\sum_{j=1}^n 2/m_j} \ti g_{\Om}(z)=\ti g_{D_0}(b^*).
\end{align*}
This proves our claim.
\end{proof}

Once Theorems~\ref{kf metric} and \ref{g tilde} have been proved, Corollary~\ref{str-pscvx} follows almost immediately with the following set of observations:
\begin{itemize}
\item[$\bullet$] If $p\in \pa \Om$ is a smooth strongly pseudoconvex boundary point, we may choose holomorphic coordinates near $p$ in which the local model $D_0$ is the Siegel half-space $\{z\in \mbb C^n:2\Re z_1+|z'|^2<0\}$.
\item[$\bullet$] The Cayley transform $\Phi$ given by
\begin{equation*}\label{C-T}
\Phi(z_1,\ldots,z_n)=\left(\dfrac{z_1+1}{z_1-1}, \dfrac{\sqrt 2\,z'}{z_1-1}\right)
\end{equation*}
maps $D_{0}$ biholomorphically onto the unit ball $\mbb{B}^n$. Also, $\Phi(b^{*})=\Phi\big((-1,0')\big)=0$ and $\Phi'(b^*)=-\diag\big\{1/2, 1/\sqrt 2,\ldots,1/\sqrt 2\big\}.$
\item[$\bullet$] The Catlin multitype of a smooth strongly pseudoconvex boundary point $p$ is given by $\mathcal{M}(p)=(1,2,\ldots,2)$.
\item[$\bullet$] Using the transformation rules of $\ti B$ and $\ti g$ under biholomorphism, we have
\begin{align*}
\ti B_{D_0}(b^*, u^*)=\ti B_{\mbb B^n}\big(0,\Phi'(b^*)u^*\big), \quad \ti g_{D_0}(b^*)=\ti g_{\mbb B^n}(0)\big|\det \Phi'(b^*)\big|^2.
\end{align*}
\item[$\bullet$] The Kobayashi--Fuks metric on $\mbb B^n$ at the origin is given by 
\[
d\ti s^2_{\mathbb{B}^n}(0)= (n+1)(n+2)\sum_{\al,\be=1}^n \de_{\al\ov\be} dz_{\al} d\ov z_{\be}.
\] 
\end{itemize} 
One can refer to \cite{Borah-Kar-2} for the computation of the Kobayashi--Fuks metric on $\mbb B^n$. \hfill \qed

\begin{proof}[Proof of Corollary~\ref{circular}]
We first observe that $p$ is indeed an h-extendible boundary point of $\Om$. The polynomial $P$ here is weighted homogeneous, subharmonic in the variable $z_2$ and has circular symmetry. The mean value peoperty then implies that $P$ is positive everywhere except at the origin. Therefore $P$ serves as its own bumping function, which implies that the model $D_0$ is h-extendible. Consequently, Theorems~\ref{kf metric} and \ref{g tilde} can be applied to compute boundary limits on $\Om$ near $p$.

Next, note that the mapping
\begin{align*}
\Psi(z_1,z_2):=\bigg(\dfrac{z_1+1}{z_1-1}, \dfrac{4^{1/m_2}z_2}{(z_1-1)^{2/m_2}} \bigg),
\end{align*}
where $m_2$ is the D'Angelo 1-type of $p\in \pa \Om$, takes $D_0$ biholomorphically onto $E$ with $b^*=(-1,0)$ being mapped to the origin. To see this, we denote the defining functions of $D_0$ and $E$ respectively by
\begin{align*}
\rho_{D_0}(z):=\Re z_1+P(z_2), \quad \rho_E(z):=|z_1|^2+P\big(|z_2|\big)-1.
\end{align*}
With observations that $P(z_2)=P\big(|z_2|\big)$ and 
\[
\dfrac{4^{1/m_2}z_2}{(z_1-1)^{2/m_2}}=\pi_{s}(z_2) \quad \text{with}\quad s=\dfrac{4}{(z_1-1)^2},
\]
we obtain
\begin{align*}
\rho_E &\circ \Psi(z)=\bigg|\dfrac{z_1+1}{z_1-1}\bigg|^2+P\bigg(\bigg|\dfrac{4^{1/m_2}z_2}{(z_1-1)^{2/m_2}}\bigg|\bigg)-1 \\
&=\dfrac{4 \Re z_1}{|z_1-1|^2}+ \dfrac{4}{|z_1-1|^2} P(z_2)=\dfrac{4}{|z_1-1|^2}\, \rho_{D_0}(z),
\end{align*}
which shows the biholomorphic equivalence between $D_0$ and $E$.

In order to compute $\ti B_{D_0}(b^*, u^*)$ and $\ti g_{D_0}(b^*)$, by using the transformation rules of $\ti B$ and $\ti g$ under biholomorphism, it is equivalent of computing $\ti B_{E}\big(0,\Psi'(b^*)u^*\big)$ and $\ti g_{E}(0)\big|\det \Psi'(b^*)\big|^2$ respectively. One easily sees that $\Psi'(b^*)=\diag \{-1/2, 1\}$, and hence
\begin{align*}
&\lim_{\substack{z\to p \\ z\in \Ga}}\big|\pi_{1/d(z)}(u)\big|^{-1}\ti B_{\Om}(z,u)=\ti B_{E}\bigg(0,\big(-u^*_1/2, u_2^*\big)\bigg),\\
&\lim_{\substack{z\to p \\ z\in \Ga}} \big(d(z)\big)^{\sum_{j=1}^n 2/m_j} \ti g_{\Om}(z)=\dfrac{1}{4}\, \ti g_E(0).
\end{align*}

As we reduced our problem to a computation on the bounded Reinhardt domain $E$, first note that the monomials $z^{\al}$ form a complete orthogonal system in $A^2(E)$. Therefore the Bergman kernel on $E$ is given by the following power series
\begin{align}\label{eq 8}
K_E(z)=a_0+b_1 |z_1|^2+ b_2 |z_2|^2+ c_{20}|z_1|^4+ c_{11}|z_1|^2|z_2|^2+ c_{02}|z_2|^4+ \cdots,
\end{align}
where $a_0=1/\text{Vol}\,E$, $b_i=1/\|z_i\|_E^2$ and $c_{jk}=1/\|z_1^j z_2^k\|_E^2$. For our purpose, since we need to compute the partial derivatives of $K_E$ up to the fourth order and then evaluate them at $0$, we may truncate the above series after the fourth order terms. The remainder of the proof will follow from a bare-hand computation as outlined below:

Recall that
\begin{align}\label{eq 12}
\ti g^{ E}_{\al\ov \be}(z) &=3\,g^{ E}_{\al\ov \be}(z)-\Ric^{ E}_{\al\ov \be}(z)\\ \nonumber
&=3\, \dfrac{\pa^2}{\pa z_{\al} \pa \ov z_{\be}}\log K_E(z)+ \dfrac{\pa^2}{\pa z_{\al} \pa \ov z_{\be}}\log \det G_E(z).
\end{align}
We will denote
\begin{align}\label{eq 7}
A_E(z):=\det G_E(z)=\dfrac{\pa^2 \log K_E}{\pa z_{1} \pa \ov z_{1}}\dfrac{\pa^2 \log K_E}{\pa z_{2} \pa \ov z_{2}}-\dfrac{\pa^2 \log K_E}{\pa z_{1} \pa \ov z_{2}}\dfrac{\pa^2 \log K_E}{\pa z_{2} \pa \ov z_{1}}\bigg|_z,
\end{align}
and then by the chain rule
\begin{align}\label{eq 11}
\dfrac{\pa^2}{\pa z_{\al} \pa \ov z_{\be}}\log \det G_E=\dfrac{1}{A_E} \dfrac{\pa^2 A_E}{\pa z_{\al} \pa \ov z_{\be}}- \dfrac{1}{A^2_E}\dfrac{\pa A_E}{\pa z_{\al}}\dfrac{\pa A_E}{\pa \ov z_{\be}}.
\end{align}
Using the expression (\ref{eq 7}) and computing successive derivatives of $A_E$ by the product rule, we obtain
\begin{align}\label{eq 9}
\dfrac{\pa A_E}{\pa z_{\al}} &=\dfrac{\pa^2 \log K_E}{\pa z_{1} \pa \ov z_{1}}\dfrac{\pa^3 \log K_E}{\pa z_{\al} \pa z_2 \pa \ov z_2}+\dfrac{\pa^2 \log K_E}{\pa z_{2} \pa \ov z_{2}}\dfrac{\pa^3 \log K_E}{\pa z_{\al} \pa z_1 \pa \ov z_1}-\dfrac{\pa^2 \log K_E}{\pa z_{1} \pa \ov z_{2}}\dfrac{\pa^3 \log K_E}{\pa z_{\al} \pa z_2 \pa \ov z_1}\\ \nonumber
&-\dfrac{\pa^2 \log K_E}{\pa z_{2} \pa \ov z_{1}}\dfrac{\pa^3 \log K_E}{\pa z_{\al} \pa z_1 \pa \ov z_2},\\ \nonumber
\dfrac{\pa^2 A_E}{\pa z_{\al} \pa \ov z_{\be}} &=\dfrac{\pa^2 \log K_E}{\pa z_{1} \pa \ov z_{1}}\dfrac{\pa^4 \log K_E}{\pa z_{\al} \pa z_2 \pa \ov z_2 \pa \ov z_{\be}}+\dfrac{\pa^3 \log K_E}{\pa z_1 \pa\ov z_1 \pa \ov z_{\be}} \dfrac{\pa^3 \log K_E}{\pa z_{\al} \pa z_2 \pa \ov z_2}+ \dfrac{\pa^3 \log K_E}{\pa z_{\al} \pa z_1 \pa \ov z_1} \dfrac{\pa^3 \log K_E}{\pa z_2 \pa\ov z_2 \pa \ov z_{\be}}\\ \nonumber
&+\dfrac{\pa^2 \log K_E}{\pa z_{2} \pa \ov z_{2}}\dfrac{\pa^4 \log K_E}{\pa z_{\al} \pa z_1 \pa \ov z_1 \pa \ov z_{\be}}-\dfrac{\pa^2 \log K_E}{\pa z_{1} \pa \ov z_{2}}\dfrac{\pa^4 \log K_E}{\pa z_{\al} \pa z_2 \pa \ov z_1 \pa \ov z_{\be}}-\dfrac{\pa^3 \log K_E}{\pa z_1 \pa \ov z_2 \pa \ov z_{\be}} \dfrac{\pa^3 \log K_E}{\pa z_{\al} \pa z_2 \pa \ov z_1}\\ \nonumber
&-\dfrac{\pa^3 \log K_E}{\pa z_{\al} \pa z_1 \pa \ov z_2} \dfrac{\pa^3 \log K_E}{\pa z_2 \pa\ov z_1 \pa \ov z_{\be}}-\dfrac{\pa^2 \log K_E}{\pa z_{2} \pa \ov z_{1}}\dfrac{\pa^4 \log K_E}{\pa z_{\al} \pa z_1 \pa \ov z_2 \pa \ov z_{\be}}.
\end{align}
We then consider partial derivatives of $\log K_E$ using the expression of $K_E$ as in (\ref{eq 8}) and evaluate them at $0$ to get
\begin{align}\label{eq 10}
\hspace{0.1mm} &\dfrac{\pa^2 \log K_E}{\pa z_{i} \pa \ov z_{j}}(0)=\dfrac{1}{a_0}\big(b_1 \de_{ij1}+b_2 \de_{ij2}\big), \enskip\quad \dfrac{\pa^3 \log K_E}{\pa z_i \pa  z_j \pa \ov z_k}(0)=0,\\ \nonumber
&\dfrac{\pa^4 \log K_E}{\pa z_i \pa z_j \pa \ov z_k \pa \ov z_l}(0)=\dfrac{1}{a_0}\big[c_{20}\de_{ijkl1}+c_{11}(1-\de_{ij})\de_{\{i,j\},\{k,l\}}+c_{02}\de_{ijkl2}\big]\\ \nonumber
&\quad \quad-\dfrac{1}{a_0^2}\big[(b_1 \de_{jl1}+b_2 \de_{jl2})(b_1 \de_{ik1}+b_2 \de_{ik2})+(b_1 \de_{il1}+b_2 \de_{il2})(b_1 \de_{jk1}+b_2 \de_{jk2})\big]
\end{align}
for $i,j,k,l\in \{1,2\}$. Similar to the Kronecker delta, here $\de_{\cdots}$ is 1 if all the entries in its subscript are the same, otherwise it is 0. Next, we use these values in Eqs.~(\ref{eq 7}) and (\ref{eq 9}) to obtain
\begin{align*}
& A_E(0)=\dfrac{b_1 b_2}{a_0^2}, \quad \dfrac{\pa A_E}{\pa z_{\al}}(0)=0, \quad \dfrac{\pa^2 A_E}{\pa z_1 \pa \ov z_2}(0)=0,\\
& \dfrac{\pa^2 A_E}{\pa z_1 \pa \ov z_1}(0)=\dfrac{b_1 c_{11}+b_2 c_{20}}{a_0^2}- \dfrac{3\,b_1^2 b_2}{a_0^3}, \quad \dfrac{\pa^2 A_E}{\pa z_2 \pa \ov z_2}(0)=\dfrac{b_2 c_{11}+b_1 c_{02}}{a_0^2}- \dfrac{3\,b_1 b_2^2}{a_0^3}.
\end{align*}
Therefore, by (\ref{eq 11}),
\begin{align*}
\Ric^E_{1\ov 1}(0)=\dfrac{3\,b_1}{a_0}-\dfrac{b_1 c_{11}+b_2 c_{20}}{b_1 b_2}, \quad \Ric^E_{2\ov 2}(0)=\dfrac{3\,b_2}{a_0}-\dfrac{b_2 c_{11}+b_1 c_{02}}{b_1 b_2},\quad \Ric^E_{1\ov 2}(0)=0.
\end{align*}
Similarly, from the first identity in (\ref{eq 10}), one obtains
\begin{align*}
g^E_{1\ov 1}(0)=\dfrac{b_1}{a_0}, \quad g^E_{2\ov 2}(0)=\dfrac{b_2}{a_0},\quad g^E_{1\ov 2}(0)=0.
\end{align*}
Now the proof can be completed by a routine calculation implementing Eq.~(\ref{eq 12}).
\end{proof}

\noindent \textbf{Concluding remarks:} In \cite{Borah-Kar-2}, the authors studied the boundary behavior of the Gaussian curvature of the Kobayashi--Fuks metric on smoothly bounded planar domains after localising the Gaussian curvature near the boundary points. In the process of this localization, the Gaussian curvature was expressed in terms of some maximal domain functions related to the Kobayashi--Fuks metric. One could have then tried to study the boundary limits of the Gaussian curvature on bounded h-extendible domains by studying the boundary limits of those maximal domain functions using similar techniques employed in this article. But in dimension one, since the set of domains with smooth boundary coincides with the set of h-extendible domains, that would have yielded nothing new! In higher dimensions, I strongly feel that one can express the holomorphic sectional curvature and the Ricci curvature of the Kobayashi--Fuks metric in terms of similar maximal domain functions, which would help us not only in obtaining the localization results but also in deriving the boundary asymptotics of these associated curvatures on h-extendible domains.
\\

\noindent \textbf{Acknowledgements} The author would like to thank D. Borah for suggesting this problem and P. Mahajan for her constant encouragement, support and guidance.\\

\noindent \textbf{Funding} The author was supported by the National Board for Higher Mathematics (NBHM) Post-doctoral Fellowship (File no. 0204/10(7)/2023/R{\&}D-II/2777) and the Post-doctoral program at Indian Institute of Science, Bangalore.\\
 
\noindent \textbf{Data availability} Data sharing is not applicable to this article as no datasets were generated or analysed during the current study.\\

\noindent \textbf{Conflict of interest} The author declares that he has no conflict of interest.

\begin{bibdiv}
\begin{biblist}

\bib{BSY-1995}{article}{
   author={Boas, H. P.},
   author={Straube, E. J.},
   author={Yu, J.},
   title={Boundary limits of the Bergman kernel and metric},
   journal={Michigan Math. J.},
   volume={42},
   date={1995},
   number={3},
   pages={449--461},
   doi={10.1307/mmj/1029005306},
}

\bib{Borah-Kar-2}{article}{
   author={Borah, D.},
   author={Kar, D.},
   title={Some remarks on the Kobayashi-Fuks metric on strongly pseudoconvex
   domains},
   journal={J. Math. Anal. Appl.},
   volume={512},
   date={2022},
   number={2},
   pages={Paper No. 126162, 24},
   doi={10.1016/j.jmaa.2022.126162},
}

\bib{BV-ns}{article}{
author={Borah, D.},
author={Verma, K.},
title={Narasimhan--Simha type metrics on strongly pseudoconvex domains in $\mathbb{C}^n$},
journal={arXiv:2105.07723},
}

\bib{Catlin2}{article}{
   author={Catlin, D.},
   title={Boundary invariants of pseudoconvex domains},
   journal={Ann. of Math. (2)},
   volume={120},
   date={1984},
   number={3},
   pages={529--586},
   doi={10.2307/1971087},
}

\bib{Catlin-1989}{article}{
   author={Catlin, D.},
   title={Estimates of invariant metrics on pseudoconvex domains of
   dimension two},
   journal={Math. Z.},
   volume={200},
   date={1989},
   number={3},
   pages={429--466},
   doi={10.1007/BF01215657},
}

\bib{Conrad}{article}{
   author={Conrad, M.},
   title={Nicht isotrope Abschätzungen für lineal konvexe Gebiete endlichen Typs},
   journal={Dissertation, Universität Wuppertal, 2002},
}

\bib{D'Angelo-1982}{article}{
   author={D'Angelo, J.},
   title={Real hypersurfaces, orders of contact, and applications},
   journal={Ann. of Math. (2)},
   volume={115},
   date={1982},
   number={3},
   pages={615--637},
   doi={10.2307/2007015},
}

\bib{Diederich-1970}{article}{
   author={Diederich, K.},
   title={Das Randverhalten der Bergmanschen Kernfunktion und Metrik in
   streng pseudo-konvexen Gebieten},
   language={German},
   journal={Math. Ann.},
   volume={187},
   date={1970},
   pages={9--36},
   doi={10.1007/BF01368157},
}

\bib{Died-Forn}{article}{
   author={Diederich, K.},
   author={Forn\ae ss, J. E.},
   title={Boundary behavior of the Bergman metric},
   journal={Asian J. Math.},
   volume={22},
   date={2018},
   number={2},
   pages={291--298},
   doi={10.4310/ajm.2018.v22.n2.a6},
}

\bib{Died-Herbort-1994}{article}{
   author={Diederich, K.},
   author={Herbort, G.},
   title={Pseudoconvex domains of semiregular type},
   conference={
      title={Contributions to complex analysis and analytic geometry},
   },
   book={
      series={Aspects Math., E26},
      publisher={Friedr. Vieweg, Braunschweig},
   },
   date={1994},
   pages={127--161},
}

\bib{Dinew11}{article}{
   author={Dinew, \.{Z}.},
   title={On the Bergman representative coordinates},
   journal={Sci. China Math.},
   volume={54},
   date={2011},
   number={7},
   pages={1357--1374},
   doi={10.1007/s11425-011-4243-4},
}

\bib{Dinew13}{article}{
   author={Dinew, \.{Z}.},
   title={On the completeness of a metric related to the Bergman metric},
   journal={Monatsh. Math.},
   volume={172},
   date={2013},
   number={3-4},
   pages={277--291},
   doi={10.1007/s00605-013-0501-6},
}

\bib{Fefferman-1974}{article}{
   author={Fefferman, C.},
   title={The Bergman kernel and biholomorphic mappings of pseudoconvex
   domains},
   journal={Invent. Math.},
   volume={26},
   date={1974},
   pages={1--65},
   doi={10.1007/BF01406845},
}

\bib{Fuks66}{article}{
   author={Fuks, B. A.},
   title={The Ricci curvature of the Bergman metric invariant with respect
   to biholomorphic mappings},
   language={Russian},
   journal={Dokl. Akad. Nauk SSSR},
   volume={167},
   date={1966},
   pages={996--999},
}

\bib{Gr}{article}{
   author={Graham, I.},
   title={Boundary behavior of the Carath\'{e}odory and Kobayashi metrics on
   strongly pseudoconvex domains in $C^{n}$ with smooth boundary},
   journal={Trans. Amer. Math. Soc.},
   volume={207},
   date={1975},
   pages={219--240},
   doi={10.2307/1997175},
}

\bib{Herbort-German-1983}{article}{
   author={Herbort, G.},
   title={\"{U}ber das Randverhalten der Bergmanschen Kernfunktion und Metrik
   f\"{u}r eine spezielle Klasse schwach pseudokonvexer Gebiete des ${\bf
   C}^{n}$},
   language={German},
   journal={Math. Z.},
   volume={184},
   date={1983},
   number={2},
   pages={193--202},
   doi={10.1007/BF01252857},
}

\bib{Jarn-Nik-2002}{article}{
   author={Jarnicki, M.},
   author={Nikolov, N.},
   title={Behavior of the Carath\'{e}odory metric near strictly convex boundary
   points},
   journal={Univ. Iagel. Acta Math.},
   number={40},
   date={2002},
   pages={7--12},
}

\bib{Jarn-Pflug-2013}{book}{
   author={Jarnicki, M.},
   author={Pflug, P.},
   title={Invariant distances and metrics in complex analysis},
   series={De Gruyter Expositions in Mathematics},
   volume={9},
   edition={Second extended edition},
   publisher={Walter de Gruyter GmbH \& Co. KG, Berlin},
   date={2013},
   pages={xviii+861},
   doi={10.1515/9783110253863},
}

\bib{Kar}{article}{
author={Kar, D.},
title={Existence of Geodesic Spirals for the Kobayashi--Fuks Metric on
   Planar Domains},
   journal={Complex Anal. Oper. Theory},
   volume={17},
   date={2023},
   number={4},
   pages={Paper No. 46},
   doi={10.1007/s11785-023-01355-7},
}

\bib{Kim-Park}{article}{
   author={Kim, J. J.},
   author={Park, S. H.},
   title={The boundary behavior between the Kobayashi-Royden and
   Carath\'{e}odory metrics on strongly pseudoconvex domain in $\bold C^n$},
   journal={Honam Math. J.},
   volume={19},
   date={1997},
   number={1},
   pages={81--86},
}

\bib{Klembeck-1978}{article}{
   author={Klembeck, P. F.},
   title={K\"{a}hler metrics of negative curvature, the Bergmann metric near the
   boundary, and the Kobayashi metric on smooth bounded strictly
   pseudoconvex sets},
   journal={Indiana Univ. Math. J.},
   volume={27},
   date={1978},
   number={2},
   pages={275--282},
   doi={10.1512/iumj.1978.27.27020},
}

\bib{Kob59}{article}{
   author={Kobayashi, S.},
   title={Geometry of bounded domains},
   journal={Trans. Amer. Math. Soc.},
   volume={92},
   date={1959},
   pages={267--290},
   doi={10.2307/1993156},
}

\bib{Krantz-Yu}{article}{
   author={Krantz, S. G.},
   author={Yu, J.},
   title={On the Bergman invariant and curvatures of the Bergman metric},
   journal={Illinois J. Math.},
   volume={40},
   date={1996},
   number={2},
   pages={226--244},
}

\bib{Lee-2001}{article}{
   author={Lee, S.},
   title={Asymptotic behavior of the Kobayashi metric on certain
   infinite-type pseudoconvex domains in ${\bf C}^2$},
   journal={J. Math. Anal. Appl.},
   volume={256},
   date={2001},
   number={1},
   pages={190--215},
   doi={10.1006/jmaa.2000.7307},
}

\bib{McNeal-1992}{article}{
   author={McNeal, J. D.},
   title={Lower bounds on the Bergman metric near a point of finite type},
   journal={Ann. of Math. (2)},
   volume={136},
   date={1992},
   number={2},
   pages={339--360},
   doi={10.2307/2946608},
}

\bib{Nikolov-2000}{article}{
   author={Nikolov, N.},
   title={Continuity and boundary behavior of the Carath\'{e}odory metric},
   language={Russian, with Russian summary},
   journal={Mat. Zametki},
   volume={67},
   date={2000},
   number={2},
   pages={230--240},
   issn={0025-567X},
   translation={
      journal={Math. Notes},
      volume={67},
      date={2000},
      number={1-2},
      pages={183--191},
   },
   doi={10.1007/BF02686245},
}

\bib{Ramadanov-1967}{article}{
   author={Ramadanov, I.},
   title={Sur une propri\'{e}t\'{e} de la fonction de Bergman},
   language={French},
   journal={C. R. Acad. Bulgare Sci.},
   volume={20},
   date={1967},
   pages={759--762},
}

\bib{Warsz}{article}{
   author={Warszawski, T.},
   title={Boundary behavior of the Kobayashi distance in pseudoconvex
   Reinhardt domains},
   journal={Michigan Math. J.},
   volume={61},
   date={2012},
   number={3},
   pages={575--592},
   doi={10.1307/mmj/1347040260},
}

\bib{Yu-1994}{article}{
   author={Yu, J.},
   title={Peak functions on weakly pseudoconvex domains},
   journal={Indiana Univ. Math. J.},
   volume={43},
   date={1994},
   number={4},
   pages={1271--1295},
   doi={10.1512/iumj.1994.43.43055},
}

\bib{Yu-1995}{article}{
   author={Yu, J.},
   title={Weighted boundary limits of the generalized Kobayashi-Royden
   metrics on weakly pseudoconvex domains},
   journal={Trans. Amer. Math. Soc.},
   volume={347},
   date={1995},
   number={2},
   pages={587--614},
   doi={10.2307/2154903},
}

\end{biblist}
\end{bibdiv}

\end{document}